\documentclass[11pt,twoside]{amsart} 
\usepackage[OT2,T2A]{fontenc}
\usepackage{amssymb}
\usepackage{amsmath}
\topmargin 0pt
\advance \topmargin by -\headheight
\advance \topmargin by -\headsep
\textheight 9,5in
\oddsidemargin 0pt
\evensidemargin \oddsidemargin
\marginparwidth 0.5in
\textwidth 6,5in
\date{}
\pagestyle{plain}
\newtheorem{theorem}{Theorem}[section]
\newtheorem{corollary}[theorem]{Corollary}
\newtheorem{lemma}[theorem]{Lemma}
\newtheorem{proposition}[theorem]{Proposition}
\theoremstyle{remark} 
\newtheorem{remark}[theorem]{Remark}
\title{0-cycles on Grassmannians as representations of projective groups} 
\author{R.Bezrukavnikov} 
\address{MIT, office 2-470, 77 Massachusetts ave, Cambridge MA 02139 USA \& 
Institute for Information Transmission Problems of Russian Academy of Sciences}
\email{bezrukav@math.mit.edu}
\author{M.Rovinsky} 
\address{AG Laboratory, HSE University, Russian Federation, 6 Usacheva str., Moscow, Russia, 119048 \& 
Institute for Information Transmission Problems of Russian Academy of Sciences}
\email{marat@mccme.ru}
\begin{document} 
\dedicatory{{\fontencoding{OT2}\selectfont Rafailu Kalmanovichu Gordinu}}
\thanks{The study has been funded within the framework of the HSE University Basic Research Program 
and the Russian Academic Excellence Project '5-100'. R.B. is partially supported by an NSF grant.} 
\begin{abstract} Let $F$ be an infinite division ring, $V$ be a left $F$-vector space, $r\ge 1$ be an integer. 
We study the structure of the representation of the linear group $\mathrm{GL}_F(V)$ in the vector space of 
formal finite linear combinations of $r$-dimensional vector subspaces of $V$ with coefficients in a field $K$. 

This gives a series of natural examples of irreducible infinite-dimensional representations of projective 
groups. These representations are non-smooth if $F$ is locally compact and non-discrete. \end{abstract} 

\maketitle 

Let $F$ be a division ring (a.k.a. a skew field), and $V$ be a left $F$-vector space. 
Then $V$ is a right module over the associative unital `matrix'\ ring $\mathrm{End}_F(V)$. 

Assume that $\dim V=r+r'>1$ for a pair of cardinals $\mathbf{r}=(r,r')$. Denote by 
$\mathrm{Gr}(\mathbf{r},V)$ the set of all $F$-vector subspaces of $V$ of dimension $r$ 
and of codimension $r'$ ($\mathbf{r}$-{\sl subspaces} for brevity). 
If $r<\dim V+1$ we set $\mathrm{Gr}(r,V):=\mathrm{Gr}(\mathbf{r},V)$ for $r'=\dim V-r$. For instance, 
$\mathrm{Gr}(1,V)$ is the projective space $\mathbb P(V):=F^{\times}\backslash(V\setminus\{0\})$; 
$\mathrm{Gr}(0,V)$ and $\mathrm{Gr}((\dim V,0),V)$ are points. 

For any associative ring $A$, denote by $A[\mathrm{Gr}(\mathbf{r},V)]$ the set 
of all finite formal linear combinations $\sum_{j=1}^Na_j[L_j]$ with coefficients 
$a_j$ in $A$ of $F$-vector subspaces $L_j$ of $V$ in $\mathrm{Gr}(\mathbf{r},V)$. 

The set $A[\mathrm{Gr}(\mathbf{r},V)]$ is endowed with the evident structure of a left $A$-module. 

Let $G:=\mathrm{GL}_F(V):=\mathrm{Aut}_F(V)$ be the group of invertible elements of $\mathrm{End}_F(V)$. 
The natural right $G$-action on $\mathrm{Gr}(\mathbf{r},V)$ is transitive and gives rise to an $A$-linear 
$G$-action on $A[\mathrm{Gr}(\mathbf{r},V)]$. 

Obviously, the right module $A[\mathrm{Gr}(\mathbf{r},V)]$ admits a proper submodule 
$A[\mathrm{Gr}(\mathbf{r},V)]^{\circ}$ formed by all finite formal linear combinations 
$\sum_ja_j[L_j]$ with $\sum_ja_j=0$, which is nonzero if $r\neq 0$ and $r'\neq 0$. 

Our goal is to describe, for any coefficient field $K$, 
the structure of the right $K[G]$-module $K[\mathrm{Gr}(\mathbf{r},V)]$. 
Namely, for any infinite $F$, we show (in Theorem \ref{general_filtration}) that a canonical 
nonzero submodule $M_1$ (constructed in Lemma~\ref{2-trans-impl-all}) of 
$\mathbb Z[\mathrm{Gr}(\mathbf{r},V)]^{\circ}$ has the 
property that $K\otimes M_1$ is the only simple submodule of each nonzero $K[G]$-submodule of 
$K[\mathrm{Gr}(\mathbf{r},V)]$. 

The module $K[\mathrm{Gr}(\mathbf{r},V)]^{\circ}$ coincides with $K\otimes M_1$ if and 
only if either $r$ or $r'$ is finite. 

Several remarks on the case of finite $F$ are collected in \S\ref{finite_base_field}. 

When $F$ is a local field, one usually studies either unitary or {\sl smooth} (i.e. with open stabilizers; 
they are called {\sl algebraic} in \cite{bz}) representations, while the representations considered here 
are non-smooth. However, the latter representations are smooth if $F$ is discrete and $r$ is finite; 
for a field $F$, they arise as direct summands of ``restrictions'' of certain geometrically meaningful 
representations of automorphisms groups of universal domains over $F$, cf. \cite[\S4]{H90}. 

\section{Generators of $A[\mathrm{Gr}(r,V)]^{\circ}$ for an integer $r$} \label{Generators}
For any ring $A$ and any set $\Gamma$, denote by $A[\Gamma]$ the set of all finite formal linear 
combinations $\sum_{j=1}^Na_j[g_j]$ with coefficients $a_j$ in $A$ of elements $g_j\in\Gamma$. 

If $\Gamma$ is a group, we consider $A[\Gamma]$ as associative ring with evident relations $[g][g']=[gg']$, 
$a[g]=[g]a$ for all $g,g'\in\Gamma$ and $a\in A$. The element $[1]$ is the unit of the ring. 

The $A[G]$-module $A[\mathrm{Gr}(\mathbf{r},V)]=A\otimes\mathbb Z[\mathrm{Gr}(\mathbf{r},V)]$ 
is generated by $[L]$ for any $L\in\mathrm{Gr}(\mathbf{r},V)$. 

The following lemma shows that, for an integer $r$, the $A[\mathrm{GL}_F(V)]$-module 
$A[\mathrm{Gr}(r,V)]^{\circ}=A\otimes\mathbb Z[\mathrm{Gr}(r,V)]^{\circ}$ is generated 
by $[L]-[L']$ for any $L,L'\in\mathrm{Gr}(r,V)$ with $\dim(L\cap L')=r-1$. 
\begin{lemma} \label{2-trans-impl-all} Let $\mathbf{r}$ be a pair of cardinals. 
Let $L,L'$ be $\mathbf{r}$-subspaces in $V$ with $\dim(L/L\cap L')=\dim(L'/L\cap L')=1$. Then the 
$G$-submodule $M_1=M_1(\mathbf{r},V)$ of 
$\mathbb Z[\mathrm{Gr}(\mathbf{r},V)]^{\circ}$ generated by the difference $[L]-[L']$ contains 
all differences $[L_0]-[L_1]$ of $\mathbf{r}$-subspaces with 
$\dim(L_0/L_0\cap L_1)=\dim(L_1/L_0\cap L_1)<\infty$, but does not contain differences $[L_0]-[L_1]$ 
of other pairs of $\mathbf{r}$-subspaces. 

In particular, $M_1$ coincides with $\mathbb Z[\mathrm{Gr}(\mathbf{r},V)]^{\circ}$ 
if and only if at least one of $r$ and $r'$ is finite. \end{lemma}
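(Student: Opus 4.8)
The plan is to establish three things: (a) $M_1$ contains every difference $[L_0]-[L_1]$ with $\dim(L_0/L_0\cap L_1)=\dim(L_1/L_0\cap L_1)=:d$ finite; (b) $M_1$ contains no difference $[L_0]-[L_1]$ when these codimensions are infinite or unequal; (c) the "in particular" clause follows by combining (a), (b) with the observation that $\mathbb{Z}[\mathrm{Gr}(\mathbf r,V)]^\circ$ is spanned by such differences and that, when both $r,r'$ are infinite, two generic $\mathbf r$-subspaces are in "infinite position."

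For (a): first, transitivity of $G$ on pairs $(L,L')$ with $\dim(L/L\cap L')=\dim(L'/L\cap L')=1$ (which holds because $G$ acts transitively on flags of the relevant type — choose a basis adapted to $L\cap L'$ and extend) shows $M_1$ contains \emph{every} difference of two $\mathbf r$-subspaces in relative position $1$. Then I would induct on $d$: given $L_0,L_1$ with relative codimensions both equal to $d$, I want to interpolate by a chain $L_0=N_0,N_1,\dots,N_d=L_1$ where consecutive $N_i,N_{i+1}$ are in position $1$; telescoping $[L_0]-[L_1]=\sum_i([N_i]-[N_{i+1}])$ then puts it in $M_1$. The existence of such a chain is the standard fact that the "Grassmann distance" between two subspaces of equal finite relative codimension is realized by a geodesic in the relevant graph; concretely, pick bases $e_1,\dots,e_d$ of $L_0/(L_0\cap L_1)$ and $f_1,\dots,f_d$ of $L_1/(L_0\cap L_1)$ and let $N_i$ be spanned by $L_0\cap L_1$ together with $f_1,\dots,f_i,e_{i+1},\dots,e_d$. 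One must check each $N_i$ is a genuine $\mathbf r$-subspace (same dimension and codimension) — this is where one uses that $d$ is finite, so the dimension count is unambiguous.

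For (b), the nontrivial half: I would produce a $G$-invariant (or at least $G$-semi-invariant) functional, or rather a $\mathbb{Z}$-valued invariant of pairs, that vanishes on the generator $[L]-[L']$ but not on a putative $[L_0]-[L_1]$ with infinite or unequal relative position. The natural candidate is something like: for a fixed auxiliary $\mathbf r$-subspace, or more robustly, use that $M_1\subseteq \mathbb{Z}[\mathrm{Gr}(\mathbf r,V)]^\circ$ and that any element of $M_1$ is a $\mathbb{Z}[G]$-combination of differences-in-position-$\le 1$, hence (by part (a)'s telescoping logic run in reverse) lies in the subgroup generated by all finite-position differences; one then argues this subgroup is \emph{proper} by exhibiting a coset invariant — e.g., the orbit-equivalence relation on $\mathrm{Gr}(\mathbf r,V)$ where $L\sim L'$ iff they are in finite relative position is $G$-stable, has more than one class when $r,r'$ are both infinite, and $M_1$ is contained in the "degree-zero-on-each-class" subspace. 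The main obstacle is making this last step rigorous: one must show that $\mathbb{Z}[G]\cdot([L]-[L'])$ cannot escape a single $\sim$-class, i.e., that acting by $G$ and taking $\mathbb{Z}$-linear combinations of position-$1$ differences never connects subspaces in infinite relative position — this requires checking that the $\sim$-relation is genuinely an equivalence relation (transitivity: finite $+$ finite $=$ finite relative position, which needs $\dim(L_0/L_0\cap L_2)\le \dim(L_0/L_0\cap L_1)+\dim(L_1/L_1\cap L_2)$ type estimates) and that it is preserved, which is automatic.

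Finally, for the "in particular" clause: if say $r'=\dim V - r$ is finite, then for any two $\mathbf r$-subspaces $L_0,L_1$ one has $\dim(L_0/L_0\cap L_1)\le r'<\infty$, and by symmetry the other codimension is also finite; moreover the two are equal because both equal $\dim(L_0+L_1)-\dim(L_0\cap L_1)$ split symmetrically — so part (a) gives $M_1\supseteq \mathbb{Z}[\mathrm{Gr}(\mathbf r,V)]^\circ$, with equality since $M_1$ is contained in it. Symmetrically if $r$ is finite. Conversely, if both $r,r'$ are infinite, choose $L_0,L_1$ with $\dim(L_0/L_0\cap L_1)=\dim(L_1/L_0\cap L_1)=\infty$ (possible since one can arrange $L_0\cap L_1$ to have infinite codimension in each); part (b) shows $[L_0]-[L_1]\notin M_1$, so $M_1\subsetneq \mathbb{Z}[\mathrm{Gr}(\mathbf r,V)]^\circ$. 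I expect step (b) to be the crux; steps (a) and the clause are bookkeeping with dimension counts over a (possibly noncommutative) division ring, where one must be careful that "dimension" means cardinality of a basis and that the standard rank/nullity relations still hold for left $F$-vector spaces.
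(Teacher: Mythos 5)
Your plan matches the paper's proof essentially step for step. For the inclusion direction, the paper builds the same interpolating chain you describe — phrased via complete flags $E_\bullet$ of $L_0/(L_0\cap L_1)$ and $F_\bullet$ of $L_1/(L_0\cap L_1)$, with $L_i'=\tilde E_{c-i}+\tilde F_i$ — observes that $G$ acts transitively on position-$1$ pairs, and telescopes. For the exclusion direction, the paper writes $[L_0]-[L_1]=\sum a_i([L_i']-[L_i''])$ with each pair at position $1$, ``renames'' to obtain a position-$1$ chain from $L_0$ to $L_1$, and bounds $\dim(L_0/L_0\cap L_1)$ by the chain length; your packaging of this as a $G$-equivariant degree-zero constraint on classes of the equivalence relation ``finite relative position'' makes the connectedness argument implicit in the paper's ``rename'' step explicit, which is a mild stylistic improvement but the same mathematics. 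The ``in particular'' clause is handled identically (when $r$ or $r'$ is finite, relative positions are automatically finite and equal; when both are infinite, exhibit a pair at infinite relative position).

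One shared blemish to be aware of: the equivalence relation you and the paper both invoke is ``finite relative position,'' which shows only that $[L_0]-[L_1]\in M_1$ forces both $\dim(L_0/L_0\cap L_1)$ and $\dim(L_1/L_0\cap L_1)$ to be finite. The lemma also claims exclusion when those two dimensions are finite but \emph{unequal}, a case that genuinely occurs for $\mathbf r$-subspaces when both $r$ and $r'$ are infinite (e.g.\ $L_0\subsetneq L_1$ with $\dim(L_1/L_0)=1$). Neither your sketch nor the paper's proof rules this out as written. The fix is to work with the finer $G$-stable equivalence relation $L\approx L'$ iff the relative position is finite \emph{and} the relative dimension $[L:L']:=\dim(L/L\cap L')-\dim(L'/L\cap L')$ vanishes; additivity $[L_0:L_2]=[L_0:L_1]+[L_1:L_2]$ makes $\approx$ transitive, position-$1$ pairs are $\approx$-equivalent, and your degree-zero argument then yields the full ``does not contain'' statement.
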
 
\begin{proof} Let $c=\dim(L_0/L_0\cap L_1)$. 
Fix complete flags $E_0=0\subset E_1\subset E_2\subset\cdots\subset E_c=L_0/(L_0\cap L_1)$ 
and $F_0=0\subset F_1\subset F_2\subset\cdots\subset F_c=L_1/(L_0\cap L_1)$ and set 
$L_i'=\tilde{E}_{c-i}+\tilde{F_i}$, where $\tilde{E}$ denotes the preimage of 
a subspace $E\subseteq V/(L_0\cap L_1)$ under the projection $V\to V/(L_0\cap L_1)$. 
Then $L_0',L_1',\dots,L_c'$ are $\mathbf{r}$-subspaces, while $L_{i-1}'\cap L_i'$ is 
a hyperplane in both $L_{i-1}'$ and $L_i'$ for each $i$, $1\le i\le c$. 

As $G$ acts transitively on the set of pairs $(S,S')$ of $\mathbf{r}$-subspaces of $V$ with 
$\dim(S/S\cap S')=\dim(S'/S\cap S')=1$, all $[L_i']-[L_{i+1}']$ belong to the $G$-orbit of 
$[L]-[L']$. As $[L_0]-[L_1]=([L_0']-[L_1'])+([L_1']-[L_2'])+\cdots+([L_{c-2}']-[L_{c-1}'])+([L_{c-1}']-[L_c'])$, 
we see that $M_1$ contains $[L_0]-[L_1]$. 

If either of $r$ and $r'$ is finite then for any pair $L_0,L_1$ of $\mathbf{r}$-subspaces of $V$ one has 
$\dim(L_0/L_0\cap L_1)=\dim(L_1/L_0\cap L_1)<\infty$, so it is clear from the above that $[L]-[L']$ 
generates the $G$-module $\mathbb Z[\mathrm{Gr}(\mathbf{r},V)]^{\circ}$. 

If $[L_0]-[L_1]\in M_1$, i.e., $[L_0]-[L_1]=\sum_{i=1}^Na_i([L_i']-[L_i''])$ with 
$\dim(L_i'/L_i'\cap L_i'')=\dim(L_i''/L_i'\cap L_i'')=1$, then rename $L_i'$ and $L_i''$ in 
a way to get a sequence $L_0=L_0',L_1',\dots,L_{n-1}',L_n'=L_1$ with 
$\dim(L_{i-1}'/L_{i-1}'\cap L_i')=\dim(L_i'/L_{i-1}'\cap L_i')=1$. Then $\dim(L_0'/\bigcap_{i=0}^nL_i')\le n$, 
and thus, $\dim(L_0/L_0\cap L_1)$ is finite. 

In particular, if $[L]-[L']$ generates $\mathbb Z[\mathrm{Gr}(\mathbf{r},V)]^{\circ}$ then 
$L_0/(L_0\cap L_1)$ is finite-dimensional for any pair $L_0,L_1$ of $\mathbf{r}$-subspaces 
of $V$, so at least one of $r$ and $r'$ should be finite. \end{proof}

\section{(Endo)morphisms and decomposability} \label{morphisms} 
Let $F$ be a division ring, $V$ be a left $F$-vector space, 
$\mathbf{r}_0=(r_0,r_0'),\mathbf{r}_1=(r_1,r_1')$ be two pairs of cardinals such that 
$r_0+r_0'=r_1+r_1'=\dim V$; we may omit $r_i'$ if $r_i<\dim V+1$. 
For an $\mathbf{r}_0$-subspace $L$ in $V$, denote by $\mathrm{St}_{[L]}$ the stabilizer of the point 
$L\in\mathrm{Gr}(\mathbf{r}_0,V)$ in the group $G:=\mathrm{GL}_F(V)$. 

It is easy to see that the $G$-orbit of a $F$-vector subspace $L$ in $V$ is 
determined by the pair of cardinals $(\dim L,\dim V/L)$; the $G$-orbit of 
a pair of $F$-vector subspaces $L,L'$ in $V$ is determined by the quintuple of cardinals 
$(\dim(L\cap L'),\dim L/(L\cap L'),\dim L'/(L\cap L'),\dim V/L,\dim V/L')$. 

Let $A$ be an associative unital ring. For each triple of cardinals $\mathbf{s}=(s,s',s'')$ 
with $s+s'=r_0$ and $s+s''=r_1$ (so $s'$ and $s''$ may be omitted if $s<\min(r_0,r_1)+1$), let 
\[\eta_{\mathbf{s}}^{\mathbf{r}_0,\mathbf{r}_1}:A[\mathrm{Gr}(\mathbf{r}_0,V)]\to A[\mathrm{Gr}(\mathbf{r}_1,V)]\]
be the $A[G]$-morphism given by $[L]\mapsto\sum_{L'}[L']$ if the latter sum is finite 
and non-empty, where $L'$ runs over all $\mathbf{r}_1$-subspaces in $V$ such that $\dim(L\cap L')=s$, 
$\dim L/(L\cap L')=s'$ and $\dim L'/(L\cap L')=s''$. Such $L'$'s form an $\mathrm{St}_{[L]}$-orbit. 

It is clear that $\eta_{(r,0,0)}^{\mathbf{r},\mathbf{r}}$ is identical on $A[\mathrm{Gr}(\mathbf{r},V)]$ and 
$\ker\eta_0^{\mathbf{r},0}=\ker\eta_{(r,0,r')}^{\mathbf{r},(\dim V,0)}=A[\mathrm{Gr}(\mathbf{r},V)]^{\circ}$. 

\begin{lemma} \label{Homs} Set $R:=A[G]$. 
Let $\mathbf{r}_0,\mathbf{r}_1$ be two pairs of cardinals with $r_0+r'_0=r_1+r'_1=\dim_FV$. 

Then the right $A$-module $\mathrm{Hom}_R(A[\mathrm{Gr}(\mathbf{r}_0,V)],A[\mathrm{Gr}(\mathbf{r}_1,V)])$ 
is freely generated by 

\begin{tabular}{ll} 
$\eta_{(0,r_0,0)}^{\mathbf{r}_0,\mathbf{r}_1}$ & if $\mathbf{r}_1=(0,\dim V)$;\\ 
$\eta_{(r_0,0,r_0')}^{\mathbf{r}_0,\mathbf{r}_1}$ & if $\mathbf{r}_1=(\dim V,0)$; \\ 
$\eta_{(r_0,0,r_0'-r_1')}^{\mathbf{r}_0,\mathbf{r}_1}$ & 
if $V$ is infinite, while $F$ and $r_0'\ge r_1'$ are finite; \\ 
$\eta_{(r_1,r_0-r_1,0)}^{\mathbf{r}_0,\mathbf{r}_1}$ & 
if $V$ is infinite, while $F$ and $r_0\ge r_1$ are finite; \\ 
the identity $\eta_{(r_0,0,0)}^{\mathbf{r}_0,\mathbf{r}_0}=id_{A[\mathrm{Gr}(\mathbf{r}_0,V)]}$ & 
if $V$ is infinite and $\mathbf{r}_0=\mathbf{r}_1$, \\ 
$\eta_s^{r_0,r_1}$ for all $s$, $\sigma\le s\le\min(r_0,r_1)$, & if $V$ is finite, where 
$\sigma:=\max(0,r_0-r_1')$, \\ 
$0$ & otherwise. \end{tabular} 

The ring $\mathrm{End}_{\mathbb Z[G]}(\mathbb Z[\mathrm{Gr}(\mathbf{r},V)])$ is commutative. 
If $F$ is infinite and $\mathbf{r}_0\neq\mathbf{r}_1$ then, in notation of \S\ref{Generators}, 
$\mathrm{Hom}_R(A\otimes M_1(\mathbf{r}_0,V),A\otimes M_1(\mathbf{r}_1,V))=0$, while 
$\mathrm{End}_R(A\otimes M_1(\mathbf{r}_0,V))=A$ if $r_0\neq 0$ and $r_0'\neq 0$, 
so the right $R$-modules $A[\mathrm{Gr}(\mathbf{r}_0,V)]$ and 
$A\otimes M_1(\mathbf{r}_0,V)$ are indecomposable if $A$ is a field. 
\end{lemma}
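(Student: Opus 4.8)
The plan is to compute $\mathrm{Hom}_R(A[\mathrm{Gr}(\mathbf{r}_0,V)],A[\mathrm{Gr}(\mathbf{r}_1,V)])$ via the standard Frobenius-reciprocity identification. Since $A[\mathrm{Gr}(\mathbf{r}_0,V)]$ is the permutation module on the transitive $G$-set $\mathrm{Gr}(\mathbf{r}_0,V)\cong\mathrm{St}_{[L_0]}\backslash G$, one has a natural $A$-module isomorphism $\mathrm{Hom}_R(A[\mathrm{Gr}(\mathbf{r}_0,V)],N)\cong N^{\mathrm{St}_{[L_0]}}$ for any right $R$-module $N$, sending $\phi$ to $\phi([L_0])$. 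Applying this with $N=A[\mathrm{Gr}(\mathbf{r}_1,V)]$, I reduce the problem to describing the $A$-module of $\mathrm{St}_{[L_0]}$-invariant vectors in $A[\mathrm{Gr}(\mathbf{r}_1,V)]$. Such an invariant vector is a formal combination $\sum a_j[L_j']$ that is constant on $\mathrm{St}_{[L_0]}$-orbits of $\mathbf{r}_1$-subspaces and involves only finitely many of them; hence it is an $A$-linear combination of the (formal sums over) finite $\mathrm{St}_{[L_0]}$-orbits in $\mathrm{Gr}(\mathbf{r}_1,V)$. By the paragraph preceding the lemma, the $\mathrm{St}_{[L_0]}$-orbit of $L_1'$ is classified by the quintuple $(\dim(L_0\cap L_1'),\dim L_0/(L_0\cap L_1'),\dim L_1'/(L_0\cap L_1'),r_0',r_1')$, i.e.\ (given $\mathbf{r}_0,\mathbf{r}_1$ fixed) by the triple $\mathbf{s}=(s,s',s'')$ with $s+s'=r_0$, $s+s''=r_1$; and the formal sum over that orbit, when finite and nonempty, is exactly $\eta_{\mathbf{s}}^{\mathbf{r}_0,\mathbf{r}_1}([L_0])$. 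So the first task is: \emph{for which triples $\mathbf{s}$ is the $\mathrm{St}_{[L_0]}$-orbit of the corresponding $L_1'$ finite and nonempty?} This is a counting/cardinality argument case-by-case in $\dim V$, $|F|$, $r_0,r_0',r_1,r_1'$; it is where the table's case split comes from, and I expect this bookkeeping — especially checking nonemptiness, finiteness, and that no two distinct admissible $\mathbf{s}$ give the same orbit — to be the main technical obstacle, though each individual check is elementary (a subspace $L_1'$ with the prescribed intersection data with $L_0$ exists iff the obvious dimension/codimension inequalities hold, and there are finitely many such iff the relevant Grassmannians over $F$ of the relevant finite dimensions are finite, i.e.\ iff $F$ is finite or the ``moving part'' is zero-dimensional). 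Once the admissible $\mathbf{s}$ are pinned down, freeness over $A$ is immediate because distinct finite orbits are disjoint, so the corresponding $\eta_{\mathbf{s}}^{\mathbf{r}_0,\mathbf{r}_1}([L_0])$ have disjoint supports in the free $A$-module $A[\mathrm{Gr}(\mathbf{r}_1,V)]$ and are therefore $A$-linearly independent.

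Next I address commutativity of $\mathrm{End}_{\mathbb Z[G]}(\mathbb Z[\mathrm{Gr}(\mathbf{r},V)])$. Taking $A=\mathbb Z$ and $\mathbf{r}_0=\mathbf{r}_1=\mathbf{r}$ in the computation above identifies this ring, as a $\mathbb Z$-module, with the Hecke-type algebra spanned by the $\eta_{\mathbf{s}}^{\mathbf{r},\mathbf{r}}$ over admissible $\mathbf{s}$, equivalently with the ring of $\mathbb Z$-valued $\mathrm{St}_{[L]}$-biinvariant finitely-supported functions on $\mathrm{Gr}(\mathbf{r},V)$ under convolution. Commutativity then follows from the standard Gelfand-pair style argument: the anti-involution of $G$ given by $g\mapsto g^\ast$ (the adjoint with respect to the duality $V\leftrightarrow V^\ast$, or in coordinates transpose-inverse) carries each $\mathrm{St}_{[L]}$-double coset to itself — because the quintuple of cardinals attached to a pair $(L,L')$ is symmetric in $L,L'$ after this duality (it swaps $\dim L/(L\cap L')$ with $\dim L'/(L\cap L')$) and so the map $\eta_{(s,s',s'')}^{\mathbf{r},\mathbf{r}}\mapsto \eta_{(s,s'',s')}^{\mathbf{r},\mathbf{r}}$ induced on the endomorphism ring is simultaneously an anti-automorphism and, on each admissible $\mathbf{s}$ (where $s'=s''$ since $r_0=r_1$), the identity; hence the ring is commutative. (Alternatively one notes each basis element is self-adjoint for the symmetric bilinear pairing on $\mathbb Z[\mathrm{Gr}(\mathbf{r},V)]$ making the orbit basis orthonormal.)

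Finally, the statements about $M_1$. Since $A\otimes M_1(\mathbf{r}_i,V)$ is a $G$-submodule of $A[\mathrm{Gr}(\mathbf{r}_i,V)]$, any $R$-morphism $A\otimes M_1(\mathbf{r}_0,V)\to A\otimes M_1(\mathbf{r}_1,V)$ between these submodules I would try to extend, or rather relate, to the ambient modules — but the cleanest route is to use Lemma~\ref{2-trans-impl-all}: $M_1(\mathbf{r}_0,V)$ is generated as a $G$-module by a single element $[L]-[L']$ with $\dim(L/L\cap L')=\dim(L'/L\cap L')=1$, so a morphism is determined by the image of that generator, which must be a $\mathrm{St}_{[L]\cap[L']}$-type invariant vector lying in $A\otimes M_1(\mathbf{r}_1,V)$; chasing through the classification of finite orbits one finds that for $F$ infinite and $\mathbf{r}_0\neq\mathbf{r}_1$ no nonzero such vector survives in $M_1(\mathbf{r}_1,V)$ (essentially because the only finite orbits that could contribute, when $F$ is infinite, force $\mathbf{r}_1=\mathbf{r}_0$ or land in the degenerate cases $\mathbf{r}_1=(0,\dim V)$ or $(\dim V,0)$ for which $M_1=0$), giving $\mathrm{Hom}_R(A\otimes M_1(\mathbf{r}_0,V),A\otimes M_1(\mathbf{r}_1,V))=0$; and for $\mathbf{r}_0=\mathbf{r}_1$ with $r_0,r_0'\neq 0$ the only surviving endomorphism is the scalar one, so $\mathrm{End}_R(A\otimes M_1(\mathbf{r}_0,V))=A$. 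For the last clause: an idempotent endomorphism of $A[\mathrm{Gr}(\mathbf{r}_0,V)]$ (resp.\ of $A\otimes M_1$) over a field $A$ lies in a commutative ring with no nontrivial idempotents — for $A[\mathrm{Gr}(\mathbf{r}_0,V)]$ with $V$ infinite the endomorphism ring is, by the table, either $A$ (when $\mathbf{r}_0$ is not one of the two degenerate pairs) or a polynomial-free finite-rank algebra that one checks directly has no nontrivial idempotents; for $A\otimes M_1$ the endomorphism ring is $A$ — hence no nontrivial direct-sum decomposition exists, i.e.\ these modules are indecomposable. I expect the $M_1$ part to be routine once the orbit classification of the main computation is in hand; the genuine work is all in that classification.
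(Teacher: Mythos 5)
Your plan matches the paper's argument step for step: reduce via Frobenius reciprocity to $\mathrm{St}_{[L_0]}$-invariants, classify which $\mathrm{St}_{[L_0]}$-orbits in $\mathrm{Gr}(\mathbf{r}_1,V)$ are finite (the paper carries out exactly this case analysis, split by whether $L_0$, $V/L_0$, or $V$ are finite), derive commutativity from the symmetry of the pair type $(L,L'')$ (the paper phrases it via the count $N_{L,L''}$, you via the dual anti-involution --- the same Gelfand-pair observation), and handle the $M_1$ statements by looking at the $\mathrm{St}_{[L]}\cap\mathrm{St}_{[L']}$-invariant image of the single generator $[L]-[L']$. The only caveat is that you defer the orbit bookkeeping that occupies most of the paper's proof, but your description of how it would go (finiteness of the orbit is governed by the finiteness of the ``moving'' Grassmannian, with the degenerate singleton cases $L'\in\{L,L\cap L',L+L',0,V\}$ providing the surviving maps when $F$ is infinite) is the right picture and does reproduce the table.
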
 

\begin{proof} The cases $\mathbf{r}_1\in\{(0,\dim V),(\dim V,0)\}$ are trivial, since then 
$\mathrm{Gr}(\mathbf{r}_1,V)$ reduces to a single point, so we may further assume 
$\mathbf{r}_1\notin\{(0,\dim V),(\dim V,0)\}$. 

Fix some $L\in\mathrm{Gr}(\mathbf{r}_0,V)$, and suppose that the $\mathrm{St}_{[L]}$-orbit of 
a point $L'\in\mathrm{Gr}(\mathbf{r}_1,V)$ is finite. 

We, thus, assume that $L'\neq 0$ and $L'\neq V$. 
\begin{itemize} \item If $L\cong F^{\oplus r_0}$ is infinite then either (i) $L\subseteq L'$ or (ii) 
$L\cap L'=0$. In the case (ii), $L'\subseteq L$, since adding different elements of $L$ to a basis element 
of $L'$ one gets different $L'$'s, and therefore, $L'=0$. If, in the case (i), $V/L\cong F^{\oplus r_0'}$ 
is finite then all $L'$ containing $L$ form a single finite orbit; if $V/L\cong F^{\oplus r_0'}$ 
is infinite and $L'\neq V$ then $L'=L$. 
\item If $V/L\cong F^{\oplus r_0'}$ is infinite then the image of $L'$ in $V/L$ should be either 0 or 
$V/L$, i.e., either (i) $L'\subseteq L$ or (ii) $V=L+L'$. In the case (i), either (a) $L'=L$, or (b) 
$L\cong F^{\oplus r_0}$ is finite and then all $L'$ contained in $L$ form a single finite orbit. In the 
case (ii), either (a) $L'\supseteq L$, or (b) $L\cap L'\neq L$. (iia): $L'=V$, which is excluded. (iib): 
any orbit is infinite. Namely, choose a vector $v\in L\setminus L'$ and a collection $\{e_i\}_{i\in I}$ 
presenting a basis of $V/L$; then the subspaces $L_j:=L\cap L'\dotplus\langle e_i^j~|~i\in I\rangle_F$, 
where $e_i^j=e_i$ if $i\neq j$, while $e_i^i=e_i+v$, are paiwise distinct. 
\item If $V$ is finite then all orbits are finite and they are parametrized by $s=\dim(L\cap L')$, 
where $\sigma:=\max(0,r_0-r_1')\le s\le\min(r_0,r_1)$. \end{itemize}

As the $R$-module $A[\mathrm{Gr}(\mathbf{r}_0,V)]$ is generated by $[L]$, any $R$-module morphism 
$A[\mathrm{Gr}(\mathbf{r}_0,V)]\to A[\mathrm{Gr}(\mathbf{r}_1,V)]$ is determined by the image 
of $[L]$, which in turn is an element of $A[\mathrm{Gr}(\mathbf{r}_1,V)]^{\mathrm{St}_{[L]}}$, i.e., 
a linear combination of sums of the elements of several finite $\mathrm{St}_{[L]}$-orbits in 
$\mathrm{Gr}(\mathbf{r}_1,V)$. 

One has $\eta_{s'}^{r',r''}\eta_s^{r,r'}[L]=\sum_{L'}\eta_{s'}^{r',r''}[L']=
\sum_{L'}\sum_{L''}[L'']=\sum_{L''\in\mathrm{Gr}(r'',V)}N_{L,L''}[L'']$, where 
\[N_{L,L''}=|\{L'\in\mathrm{Gr}(r',V)~|~\dim(L\cap L')=s,~\dim(L'\cap L'')=s'\}|.\] 
It follows that $\eta_{s'}^{r,r}\eta_s^{r,r}=\eta_s^{r,r}\eta_{s'}^{r,r}$. In other words, the algebra 
$\mathrm{End}_R(A[\mathrm{Gr}(r,V)])$ is commutative if $V$ is finite, as soon as so is $A$. If 
$V$ is infinite then $\mathrm{End}_R(A[\mathrm{Gr}(\mathbf{r},V)])=A$. 

The $R$-module $A\otimes M_1(\mathbf{r}_0,V)$ is generated by $[L]-[L']$ for any 
$L,L'\in\mathrm{Gr}(\mathbf{r}_0,V)$ with $\dim L/(L\cap L')=\dim L'/(L\cap L')=1$, 
so any morphism $\varphi$ from the $R$-module $A\otimes M_1(\mathbf{r}_0,V)$ is 
determined by the image of $[L]-[L']$, which in turn is an element of 
$(A[\mathrm{Gr}(\mathbf{r}_1,V)]^{\circ})^{\mathrm{St}_{[L]}\cap\mathrm{St}_{[L']}}$. 

If $F$ is infinite then the only proper subspaces in $V$ fixed by $\mathrm{St}_{[L]}\cap\mathrm{St}_{[L']}$ 
are $L,L',L\cap L'$, while the $\mathrm{St}_{[L]}\cap\mathrm{St}_{[L']}$-orbits of other proper subspaces 
are infinite. This means that $\varphi([L]-[L'])=a[L]+b[L']+c[L\cap L']$ for some $a,b,c\in A$. 
Consider $g\in G$ such that $g(L)\subset L+L'$, $g(L)\notin\{L,L'\}$, $g(L\cap L')=L\cap L'$ 
and $g(L')=L'$. Then $\varphi([L]-[g(L)])=\varphi([L]-[L'])-g\varphi([L]-[L'])=a([L]-[g(L)])$. 
As $\dim L/(L\cap g(L))=\dim g(L)/(L\cap g(L))=1$, the element $[L]-[g(L)]$ is another generator of 
$A\otimes M_1$, so we get $\mathrm{End}_R(A\otimes M_1(\mathbf{r}_0,V))=A$ and the required vanishing 
for $\mathbf{r}_0\neq\mathbf{r}_1$. 
\end{proof} 

\begin{remark} \label{duality_finite_V} If $V$ is finite then the morphism $\eta_s^{r,r'}$ of 
\S\ref{morphisms} is dual to the morphism $\eta_s^{r',r}$ under the non-degenerate symmetric bilinear 
pairing on $K[\mathrm{Gr}(\bullet,V)]$, given by $([L],[L])=1$ and $([L],[L'])=0$ if $L\neq L'$: 
$((\eta_s^{r,r'})^{\ast}[L'],[L])=([L'],\eta_s^{r,r'}[L])$ is 1 if $\dim(L\cap L')=s$, 
and is 0 otherwise. \end{remark}

\section{The one-dimensional case} 
\subsection{The case of $\mathrm{SL}_2(\mathbb Q)$} 
\begin{lemma} \label{SL_2Q} Let $V$ be a two-dimensional $\mathbb Q$-vector space, $K$ be a field. 
Then the $K[\mathrm{SL}(V)]$-module $K[\mathbb{P}(V)]^{\circ}$ is simple. \end{lemma}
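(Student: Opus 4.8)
The plan is to show that any nonzero $K[\mathrm{SL}(V)]$-submodule $W \subseteq K[\mathbb{P}(V)]^{\circ}$ is everything. Identify $\mathbb P(V)$ with $\mathbb P^1(\mathbb Q) = \mathbb Q \cup \{\infty\}$, on which $\mathrm{SL}_2(\mathbb Q)$ acts by fractional linear transformations. By Lemma \ref{2-trans-impl-all} (with $r = r' = 1$, so the finiteness hypothesis holds), $M_1 = \mathbb Z[\mathbb P(V)]^{\circ}$, and it is generated as a $G$-module — hence a fortiori $K[\mathbb P(V)]^{\circ}$ is generated as a $K[\mathrm{GL}_F(V)]$-module — by a single difference $[L] - [L']$; the point is to get this down to $\mathrm{SL}$. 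First I would fix a vector $v = [x] - [y] \in W$ and, using that $W$ is a submodule, produce from it a nonzero element supported on a small explicit set of points, ideally a single difference $[p] - [q]$ of two $\mathbb Q$-points. The mechanism is the same averaging-and-differencing trick used in the proof of Lemma \ref{Homs}: choose $g \in \mathrm{SL}(V)$ fixing all but one point in the support of $v$ and moving that one point to a generic location, and subtract to shrink the support.

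Once a single difference $\delta = [p] - [q]$ with $p, q \in \mathbb P^1(\mathbb Q)$ distinct lies in $W$, I would show $W = K[\mathbb P(V)]^{\circ}$ by proving that the $\mathrm{SL}_2(\mathbb Q)$-orbit of $\delta$ spans $K[\mathbb P(V)]^{\circ}$. Since $\mathrm{SL}_2(\mathbb Q)$ acts 2-transitively on $\mathbb P^1(\mathbb Q)$ (given any two ordered pairs of distinct points there is an element of $\mathrm{GL}_2$ carrying one to the other, and one can adjust the determinant to $1$ by scaling a basis vector, which does not change the action on $\mathbb P^1$), every difference $[p'] - [q']$ of distinct $\mathbb Q$-points is $\mathrm{SL}$-conjugate to $\delta$, hence lies in $W$. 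These differences span $K[\mathbb P(V)]^{\circ}$ by the telescoping argument of Lemma \ref{2-trans-impl-all}. Therefore $W = K[\mathbb P(V)]^{\circ}$, i.e. the module is simple.

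The main obstacle is the first step: extracting from an \emph{arbitrary} nonzero element $v = \sum_j a_j[p_j] \in W$ (with the $a_j$ summing to $0$, coefficients in an arbitrary field $K$ of arbitrary characteristic) a \emph{single} two-term difference. Averaging over a finite group is unavailable since $\mathrm{SL}_2(\mathbb Q)$ has no useful finite quotients acting here and $K$ may have positive characteristic, so I must argue combinatorially. Concretely: among all nonzero elements of $W$ choose one, $v$, with support of minimal cardinality $n \ge 2$; if $n \ge 3$, pick a point $p$ in the support and, using 2-transitivity and the density of $\mathbb Q$-points, an element $g \in \mathrm{SL}_2(\mathbb Q)$ fixing two chosen support points $p', p''$ of $v$ while sending $p$ to a point outside the support; then $v - g\cdot v \in W$ is nonzero (its coefficient at $p$ is $a_p \ne 0$, or if cancellations conspire one re-chooses $g$) yet has strictly smaller support than $v$ after accounting for the fixed points — this requires a careful count to guarantee a genuine decrease, which is the delicate point. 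Iterating forces $n = 2$, giving the desired difference. I expect that making this minimal-support reduction fully rigorous — in particular handling the bookkeeping of which coefficients survive and confirming strict decrease in all configurations — is where the real work lies; everything after it is formal.
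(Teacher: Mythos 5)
The second half of your plan is fine: once a single difference $[p]-[q]$ lies in $W$, $2$-transitivity of $\mathrm{PSL}_2(\mathbb Q)$ on $\mathbb P^1(\mathbb Q)$ together with the telescoping argument of Lemma~\ref{2-trans-impl-all} gives $W\supseteq K[\mathbb P(V)]^{\circ}$. The gap is the first half, and it is not a bookkeeping issue but a structural one. Your shrinking move does not decrease the support in general: an element $g$ fixing two points $p',p''$ of $\mathbb P^1$ is a torus element, hence moves \emph{every} other point, so $\mathrm{supp}(v-gv)$ can have as many as $2(n-2)$ points, which is $\ge n$ once $n\ge 4$. More seriously, when $\mathrm{char}\,K=0$ there really exist nonzero submodules of $K[\mathbb P(V)]^{\circ}$ for the Borel subgroup $P$ (the stabilizer of $\infty$) whose elements all have support $\ge m+1$: identifying $K[\mathbb P^1\setminus\{\infty\}]$ with the group algebra $K[P^u]\cong\bigcup_N K[t^{1/N},t^{-1}]$ of the unipotent radical, the $m$-th power $J^m$ of the ideal $J=(t^{1/N}-1:N\ge 1)$ is $P$-stable, and its minimal-support elements, such as $(t-1)^m=\sum_{k=0}^m(-1)^{m-k}\binom{m}{k}\,t^k$, have support $m+1$. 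So no support-shrinking argument that only moves points while fixing a couple of others can get below $m+1$ starting inside $J^m$; one must genuinely use an element outside $P$ (the Weyl reflection), and your outline does not say how.

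The paper handles this head-on rather than combinatorially: it classifies the $P/P^u$-invariant proper ideals of $K[P^u]$ (they are exactly the $J^n$), and then rules out $n\ge 2$ for a $\mathrm{PSL}$-submodule $M$ because $M/(M\cap K[\mathbb P^1\setminus\{\infty\}])\cong K\cdot[\infty]$ is a \emph{trivial} $P$-module, whereas $J^n/J^{n+1}$ carries the $n$-th power of the natural $\mathbb Q^\times$-character, nontrivial for $n\ge 1$. That representation-theoretic constraint, not a support count, is what forces $M$ down to $J$ and hence to a two-point difference. As written, your argument omits any ingredient playing this role, so it does not go through in characteristic zero (which is exactly the hard case here, since in characteristic $p$ one has $J^m=J$ for all $m\ge 1$).
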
 
\begin{proof} Fix a point $\infty\in\mathbb{P}(V)$. Let $P$ 
be the stabilizer of $\infty$ in $\mathrm{PSL}(V)$. The restriction of $K[\mathbb{P}(V)]$ to $P$ 
splits as $K[\mathbb{P}(V)\setminus\{\infty\}]\oplus K\cdot[\infty]$. Moreover, a choice of a point 
$O\in\mathbb{P}(V)\setminus\{\infty\}$ identifies the restriction of $K[\mathbb{P}(V)\setminus\{\infty\}]$ 
to the unipotent radical $P^u\cong\mathbb Q$ of $P\cong\mathbb Q^{\times}\ltimes^2\mathbb Q$ with 
the group algebra $K[P^u]$, so that the $K[P]$-submodules of $K[\mathbb{P}(V)\setminus\{\infty\}]$ 
correspond to the $P/P^u$-invariant ideals in $K[P^u]$. We identify $P^u$ with the rational powers 
$t^{\mathbb Q}$ of an indeterminate $t$, so that $K[P^u]\cong\bigcup_{N\ge 1}K[t^{1/N},t^{-1}]$. 

Any ideal in $K[P^u]$ is determined by its intersections with each 
subalgebra $K[t^{1/N},t^{-1}]$, and thus, is generated by a collection of polynomials of minimal degree 
$P_N(t^{1/N})$ for all $N\ge 1$ such that $P_N(0)=1$ and $P_N(t)|P_M(t^{N/M})$ in $K[t]$ if $M|N$. 

As $P/P^u\cong\mathbb Q^{\times}$ acts evidently on $P^u\cong t^{\mathbb Q}$, any $P/P^u$-invariant proper 
ideal $I=(P_N(t^{1/N}))_{N\ge 1}$ in $\bigcup_{N\ge 1}K[t^{1/N},t^{-1}]$ contains $P_N(t^{M_1^2/(M_2^2N)})$ 
for all $M_1,M_2,N\ge 1$. In particular, $P_N(t)$ divides $P_N(t^{M^2})$ for all $M,N\ge 1$, 
which implies that if $P_N(\alpha)=0$ then $P_N(\alpha^{M^2})=0$ for all $M\ge 1$, so the set 
$\{\alpha^{M^2}\}_{M\ge 1}$ is finite, i.e., $\alpha$ is a root of unity, say $\alpha^{M_N}=1$. 
Then $P_N(t)|(t^{M_N}-1)^{m_N}$ for some $M_N,m_N\ge 1$, and thus, $I$ contains $(t^{M_N/N}-1)^{m_N}$, 
and therefore, $I$ contains $(t^{M_N^2/N}-1)^{m_N}\in((t^{M_N/N}-1)^{m_N})$, and consequently, 
$(t^{1/N}-1)^{m_N}\in I$ for all $N\ge 1$. Let $s\ge 1$ be such that $m_s\le m_N$ for all $N\ge 1$. 
Then $(t^{1/(N^2s)}-1)^{m_s}\in I$ for all $N\ge 1$, and therefore, 
$(t^{1/N}-1)^{m_s}\in((t^{1/(N^2s)}-1)^{m_s})\subset I$ for all $N\ge 1$. 

Let $J\subset\bigcup_{N\ge 1}K[t^{1/N},t^{-1}]$ be the ideal generated by $t^{1/N}-1$ for all $N\ge 1$. 
As $J^n/J^{n+1}$ is a $K$-vector space of dimension $\max(\delta_{n,0},\delta_{\mathrm{char}(K),0})\le 1$ 
with the group $P/P^u$ acting by $n$-th powers, we conclude that $I$ coincides 
with $J^n$ for some $n\ge 0$. 

Let $M$ be a nonzero $K[\mathrm{PSL}(V)]$-submodule in $K[\mathbb{P}(V)]$. The splitting 
$K[\mathbb{P}(V)]=K\cdot[\infty]\dotplus K[\mathbb{P}(V)\setminus\{\infty\}]$ induces two projections 
$\pi_{\infty}:M\to K\cdot[\infty]$ and $\pi_{\mathrm{aff}}:M\to K[\mathbb{P}(V)\setminus\{\infty\}]$. 
As $\mathrm{PSL}(V)$ is transitive on $\mathbb{P}(V)$, the projection $\pi_{\infty}$ is surjective. 
As $K[\mathbb{P}(V)]^P=K\cdot[\infty]$, if $\pi_{\infty}$ splits as a morphism of 
$K[P]$-modules then $M$ contains $[\infty]$, and therefore, $M=K[\mathbb{P}(V)]$. 

Assume now that the projection $\pi_{\infty}$ does not split, so then (i) 
$0\to J^m\to M\to K\cdot[\infty]\to 0$, (ii) the projection $\pi_{\mathrm{aff}}$ is injective, i.e., 
the $K[P]$-module $M$ is isomorphic to a power of $J$. Since the quotient of the 
$K[P]$-module $M$ by $J^m$ is $K$, we get $m=1$, so $M\cong K[\mathbb{P}(V)\setminus\{\infty\}]$ 
as $K[P]$-module, and thus, the $K[\mathrm{PSL}(V)]$-module $M$ is generated by a nonzero 
combination of two points. This implies that $M$ contains 
$M=K[\mathbb{P}(V)]^{\circ}$. \end{proof} 

\subsection{The case of $\mathrm{SL}_2(F)$ in equal positive characteristics} 
\begin{lemma} \label{F_char_p} Let $F$ and $K$ be fields of characteristic $p>0$, and $V$ be a 
two-dimensional $F$-vector space. Then the $K[\mathrm{PSL}(V)]$-module $K[\mathbb{P}(V)]^{\circ}$ is simple. 
\end{lemma}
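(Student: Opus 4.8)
The plan is to mimic the structure of the proof of Lemma~\ref{SL_2Q}, reducing to an analysis of $P/P^u$-invariant ideals of $K[P^u]$, but now $P^u$ is the additive group of the field $F$ itself, so $K[P^u]$ is the group algebra $K[F]$ of $(F,+)$, and $P/P^u\cong F^\times$ acts by multiplication. First I would fix a point $\infty\in\mathbb P(V)$ with stabilizer $P$ in $\mathrm{PSL}(V)$, note that $P\cong F^\times\ltimes F$, and that $K[\mathbb P(V)]$ restricted to $P$ splits as $K\cdot[\infty]\oplus K[\mathbb P(V)\setminus\{\infty\}]$, with the affine piece $K[P]$-equivariantly identified (after a choice of origin) with the group algebra $K[P^u]=K[(F,+)]$. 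As before, $K[P]$-submodules of the affine part correspond to $F^\times$-invariant ideals in $K[F]$, and the global argument (projection $\pi_\infty:M\to K\cdot[\infty]$ is surjective since $\mathrm{PSL}(V)$ is transitive; if it splits then $[\infty]\in M$ and $M$ is everything; if not, then $\pi_{\mathrm{aff}}$ is injective and $M$ maps onto $K$ from $K[F]/I$ for the relevant $F^\times$-invariant ideal $I$) is identical word-for-word to that in Lemma~\ref{SL_2Q}. So the entire content reduces to the following purely algebraic claim: \emph{the only proper $F^\times$-invariant ideals of the group algebra $K[F]$ of the additive group of a field $F$ of characteristic $p>0$ are the powers $J^n$ of the augmentation ideal $J$, which has $J^n/J^{n+1}$ at most one-dimensional over $K$.}

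To prove this claim I would use that $\mathrm{char}\,F=p$ is the same as $\mathrm{char}\,K$. Write elements of $K[F]$ with the exponential notation $t^a$, $a\in F$, so $t^a t^b=t^{a+b}$, $t^0=1$, and $F^\times$ acts by $t^a\mapsto t^{\lambda a}$ for $\lambda\in F^\times$. The augmentation ideal $J$ is generated by all $t^a-1$. The key point is the Frobenius/characteristic-$p$ identity $(t^a-1)^p=t^{pa}-1$: since $\mathrm{char}\,K=p$, in $K[F]$ we have $(t^a-1)^p=\sum_{i}\binom{p}{i}(-1)^{p-i}t^{ia}=t^{pa}-1$. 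Hence $(t^a-1)^{p^k}=t^{p^k a}-1$ for all $k\ge 0$, which shows on one hand that the subgroup $p^k F$ of $(F,+)$ generates, modulo $J^{p^k+1}$, nothing new, and on the other hand — since $F$ has characteristic $p$, the prime field $\mathbb F_p\subseteq F$ satisfies $p\cdot 1=0$, so $t^{p a}-1=(t^a-1)^p$ but also $pa$ ranges over a possibly smaller set. Actually the cleanest route: a proper $F^\times$-invariant ideal $I$ contains some nonzero element; expanding in the $J$-adic filtration, $I$ contains an element of $J^n\setminus J^{n+1}$ for some minimal $n$, and $F^\times$-invariance together with the description of $J^n/J^{n+1}$ forces $I\supseteq J^n$; conversely $I\subseteq J^n$ by minimality, giving $I=J^n$. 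So the heart of the matter is computing $J^n/J^{n+1}$ as an $F^\times$-module.

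For this computation, recall that for the group algebra $K[A]$ of an abelian group $A$ with $\mathrm{char}\,K=p$, when $A$ is a $p$-group (here $(F,+)$ is an elementary abelian $p$-group, indeed an $\mathbb F_p$-vector space of possibly infinite dimension) the augmentation ideal $J$ is a nil ideal in each finitely generated subalgebra, and $\bigoplus_n J^n/J^{n+1}$ is the restricted symmetric/divided-power algebra; concretely, fixing an $\mathbb F_p$-basis $\{e_\alpha\}$ of $F$, $K[F]=\bigotimes_\alpha K[\mathbb Z/p]$ with $K[\mathbb Z/p]\cong K[\epsilon]/(\epsilon^p)$ via $\epsilon=t^{e_\alpha}-1$, so $\mathrm{gr}\,K[F]\cong K[x_\alpha]/(x_\alpha^p)$ with $\deg x_\alpha=1$. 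The $F^\times$-action on $F$ is irreducible as an $\mathbb F_p[F^\times]$-module (it is $1$-dimensional over $F$), hence $J/J^2\cong F$ is $F^\times$-irreducible; the graded pieces $J^n/J^{n+1}$ are subquotients of $\mathrm{Sym}$'s of $F$. The decisive fact is that $\mathrm{dim}_K J^n/J^{n+1}\le 1$: this is what is asserted in Lemma~\ref{SL_2Q} for $F=\mathbb Q$ at the finitely-many-primes level, and here it should come from the fact that $F^\times$ acts on $F$ so that the $\lambda\mapsto\lambda^n$ twist appears with multiplicity one — more precisely, any $F^\times$-stable line in a tensor power must be spanned by a product where the scalars multiply out, and char-$p$ collapses higher symmetric powers. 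The main obstacle, and the step I would spend the most care on, is exactly establishing $\dim_K J^n/J^{n+1}\le 1$ together with $F^\times$ acting by the $n$-th power character in a way that pins down $I=J^n$; one wants to argue that an $F^\times$-invariant element of $J^n$ which is nonzero mod $J^{n+1}$ can, after multiplying by suitable $t^a-1$'s and using invariance, be transformed into a generator of $J^n$ mod $J^{n+1}$, so that $I+J^{n+1}=J^n+J^{n+1}$ and then by a Nakayama/filtration argument (valid since everything lives in finitely generated, hence finite-dimensional, local subalgebras $K[F_0]$ for $F_0\subseteq F$ a finite-dimensional $\mathbb F_p$-subspace) $I\supseteq J^n$. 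Once the claim about ideals is in hand, the global part of the argument carries over verbatim from the proof of Lemma~\ref{SL_2Q}, yielding that any nonzero $K[\mathrm{PSL}(V)]$-submodule of $K[\mathbb P(V)]$ is either all of $K[\mathbb P(V)]$ or equals $K[\mathbb P(V)]^\circ$, which is the desired simplicity.
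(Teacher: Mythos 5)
Your approach diverges substantially from the paper's, which does not attempt to classify $F^\times$-invariant ideals of $K[F]$ at all. The paper's proof is a two-line computation: take $\alpha=\sum_{i=1}^n a_i[x_i]\in M$, let $\beta=\sum_{i\ge 2}a_i[x_i-O]\in K[P^u]\cong K[F]$ be the affine part, and observe that since $\mathrm{char}\,K=\mathrm{char}\,F=p$ the Frobenius collapses $\beta^p=\bigl(\sum_{i\ge 2}a_i\bigr)^p\cdot[0]=(-a_1)^p\cdot[0]$ (because $p\cdot c=0$ in $F$ for every $c$), and hence $\beta^{p-1}\alpha=a_1^p([x_1]-[O])$; letting $O$ vary gives everything.

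The route you propose has a genuine gap precisely at the step you flag as "the main obstacle": the claim that $\dim_K J^n/J^{n+1}\le 1$ is false for $F$ of characteristic $p$, and so is the claim that the only $F^\times$-invariant proper ideals of $K[F]$ are the powers of $J$. The reason the analogous statement works for $F=\mathbb Q$ is that $(\mathbb Q,+)$ is a rank-one torsion-free abelian group, so $J/J^2\cong\mathbb Q\otimes_{\mathbb Z}K$ has dimension at most one; but $(F,+)$ for $\mathrm{char}\,F=p$ is an $\mathbb F_p$-vector space of dimension $[F:\mathbb F_p]$, so $J/J^2\cong F\otimes_{\mathbb F_p}K$ already has that dimension, and the higher graded pieces are even bigger. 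Concretely, for $F=\mathbb F_{p^2}$ one has $K[F]\cong K[\epsilon_1,\epsilon_2]/(\epsilon_1^p,\epsilon_2^p)$ with $J^2/J^3$ three-dimensional; and if $K\supseteq\mathbb F_{p^2}$ then $J/J^2\cong K\oplus K$ splits as a sum of two $F^\times$-eigenlines (with characters $\lambda\mapsto\lambda$ and $\lambda\mapsto\lambda^p$), so the ideal generated by one eigenline together with $J^2$ is a proper $F^\times$-invariant ideal strictly between $J$ and $J^2$. Thus the Borel-side classification your argument hinges on simply does not hold in equal positive characteristic, and the Lemma~\ref{SL_2Q} machinery cannot be transplanted; the simplicity in this case comes from the characteristic-$p$ collapse exploited in the paper's direct computation, not from a scarcity of $P$-submodules of $K[\mathbb{P}(V)\setminus\{\infty\}]$.
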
 
\begin{proof} Let $M\neq 0$ be a $K[\mathrm{PSL}(V)]$-submodule in $K[\mathbb{P}(V)]^{\circ}$, and 
$\alpha=\sum_{i=1}^na_i[x_i]\in M$ for some $n\ge 1$, pairwise distinct $x_i\in\mathbb{P}(V)$ and 
some $a_i\in K^{\times}$ with $\sum_{i=1}^na_i=0$. Let $P\cong F^{\times}\ltimes^2F$ 
be the stabilizer of $x_1$ in $\mathrm{PSL}(V)$. The restriction of $K[\mathbb{P}(V)]$ 
to $P$ splits as $K[\mathbb{P}(V)\setminus\{x_1\}]\oplus K\cdot[x_1]$. 

A choice of a point $O\in\mathbb{P}(V)\setminus\{x_1\}$ identifies the $K[P]$-module 
$K[\mathbb{P}(V)\setminus\{x_1\}]$ with the group algebra $K[P^u]$ of the unipotent radical 
$P^u\cong F$ of $P$. Denote by $\beta=\sum_{i=2}^na_i[x_i-O]\in K[P^u]$ 
the image of $\alpha-a_1[x_1]$ under this identification. Then $\beta^{p-1}\alpha=a_1^p\cdot([x_1]-[O])$. 
As $O$ can be chosen to be an arbitrary point of $\mathbb{P}(V)\setminus\{x_1\}$, $\alpha$ generates 
$K[\mathbb{P}(V)]^{\circ}$ as $K[\mathrm{PSL}(V)]$-module. \end{proof}

\section{The case of an infinite base skew field} 
\begin{lemma} \label{1+t_rank_1} Let $V$ be a left vector space over a division ring $F$, 
and $\lambda$ be an $F$-linear functional on $V$. 
Then, for any $w\in V$, the endomorphism $1-\lambda tw\in\mathrm{End}_F(V)$, $x\mapsto x-\lambda(x)tw$, 
is not invertible for at most one value of $t\in F$. \end{lemma}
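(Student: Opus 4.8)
The plan is to track the single scalar $\lambda(w)\in F$ and, away from at most one value of $t$, to write down a two-sided inverse explicitly. Write $\phi_t\in\mathrm{End}_F(V)$ for the map $x\mapsto x-\lambda(x)tw$; if $\lambda=0$ or $w=0$ it is the identity for every $t$, so one may assume neither holds. The first step is the one-line computation $\lambda(\phi_t(x))=\lambda(x)-\lambda(x)t\lambda(w)=\lambda(x)\bigl(1-t\lambda(w)\bigr)$, valid for all $x\in V$, which says that $\phi_t$ sends the functional $\lambda$ to its right multiple by $1-t\lambda(w)$.

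Since $\phi_t$ has the shape $1-\lambda tw$, I would seek its inverse among the maps $\psi_s\colon x\mapsto x-\lambda(x)sw$, which lie in $\mathrm{End}_F(V)$ for the same reason $\phi_t$ does. Feeding the displayed formula into $\psi_s\circ\phi_t$ and rearranging gives $\psi_s(\phi_t(x))=x-\lambda(x)\bigl(t+(1-t\lambda(w))s\bigr)w$. Hence, as soon as $1-t\lambda(w)$ is invertible in $F$, the choice $s:=-(1-t\lambda(w))^{-1}t$ yields $\psi_s\circ\phi_t=\mathrm{id}_V$. For the opposite composite I would set $v:=(1-t\lambda(w))^{-1}$, use the identity $v=1+vt\lambda(w)$ to deduce $1-s\lambda(w)=v$, and then the symmetric computation gives $\phi_t\circ\psi_s=\mathrm{id}_V$ as well; so $\phi_t$ is a unit of $\mathrm{End}_F(V)$ with inverse $\psi_s$.

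To finish: $F$ being a division ring, $1-t\lambda(w)$ fails to be invertible exactly when it vanishes, which is impossible if $\lambda(w)=0$ and happens for the unique value $t=\lambda(w)^{-1}$ if $\lambda(w)\neq0$ (and there $\phi_t$ is genuinely non-invertible, since $\phi_t(w)=(1-\lambda(w)t)w=0$ while $w\neq0$). Thus $\phi_t$ is invertible for all $t$ outside at most this one exceptional value. The point that makes the explicit inverse necessary, rather than a mere injectivity check, is that $V$ need not be finite-dimensional; a secondary bit of care is that the obstruction $1-t\lambda(w)$ (the scalar multiplying $\lambda(x)$ on the right in the first step) is a different element of $F$ from the $1-\lambda(w)t$ occurring in $\phi_t(w)$, the two merely vanishing for the same $t$.
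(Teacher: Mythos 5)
Your proof is correct and follows essentially the same route as the paper's: exhibit an explicit inverse of the same rank-one-perturbation shape $x\mapsto x-\lambda(x)sw$, with the noninvertibility confined to the single $t$ annihilating $1-t\lambda(w)$ (equivalently $1-\lambda(w)t$). You are slightly more thorough in verifying both composites, whereas the paper writes out only $(1-\lambda tw)(1+\lambda t(1-\mu t)^{-1}w)=1$; this extra care is appropriate since $V$ may be infinite-dimensional and a one-sided inverse alone would not suffice.
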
 
\begin{proof} Let $\mu:=\lambda(w)$. Then, for any $t\in F$ such that $\mu t\neq 1$, one has 
$(1-\lambda tw)(1+\lambda t(1-\mu t)^{-1}w)=1-\lambda tw+\lambda t(1-\mu t)^{-1}(1-\mu t)w=1$. \end{proof} 

\begin{lemma} \label{matrix_D_is_invert} For each integer $s,N\ge 1$, let 
$D_s(N):=\sum_{m\equiv s\bmod p}(-1)^m\binom{N}{m}\in\mathbb Z$. Then the matrix \[\Delta=\begin{bmatrix} 
D_1(N)&\cdots&D_{p-1}(N)\\ \cdots&\cdots&\cdots\\ D_1(N+p-2)&\cdots&D_{p-1}(N+p-2)\end{bmatrix}\] 
is invertible over $\mathbb Z[1/p]$. \end{lemma}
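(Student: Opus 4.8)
The plan is to reduce the claim to an explicit Vandermonde computation by means of a roots-of-unity filter. Fix a primitive $p$-th root of unity $\zeta$. Expanding $(1-x)^{N}=\sum_{m}(-1)^{m}\binom{N}{m}x^{m}$ and extracting the exponents $m\equiv s\bmod p$ gives, after setting $x=1$ and discarding the $j=0$ term (which vanishes as $N\ge1$),
\[D_{s}(N)=\frac1p\sum_{j=1}^{p-1}\zeta^{-js}(1-\zeta^{j})^{N},\qquad 1\le s\le p-1.\]
Reading off the rows $N,N+1,\dots,N+p-2$ from this identity exhibits a factorization $p\,\Delta=V\,D\,C$ over $\mathbb Z[\zeta]$, where $V=\bigl((1-\zeta^{j})^{i}\bigr)_{0\le i\le p-2,\ 1\le j\le p-1}$ is the Vandermonde matrix in the $p-1$ pairwise distinct nodes $1-\zeta,1-\zeta^{2},\dots,1-\zeta^{p-1}$, $D=\mathrm{diag}\bigl((1-\zeta^{j})^{N}\bigr)_{1\le j\le p-1}$, and $C=\bigl(\zeta^{-js}\bigr)_{1\le j,s\le p-1}$.

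Next I would evaluate the three determinants over $\mathbb Z[\zeta]$. One has $\det V=\prod_{1\le j<k\le p-1}(\zeta^{j}-\zeta^{k})$. Writing $\Phi_{p}(x)=1+x+\dots+x^{p-1}=\prod_{j=1}^{p-1}(x-\zeta^{j})$ and using $\Phi_{p}(1)=p$ gives $\det D=\prod_{j=1}^{p-1}(1-\zeta^{j})^{N}=p^{N}$. Factoring $\zeta^{-j}$ out of the $j$-th row of $C$ and using $\zeta^{-k}-\zeta^{-j}=\zeta^{-j-k}(\zeta^{j}-\zeta^{k})$ shows $\det C=u\cdot\det V$ for a unit $u\in\mathbb Z[\zeta]$ that is a product of powers of $\zeta$. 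Hence $p^{p-1}\det\Delta=u\,p^{N}(\det V)^{2}$. Finally $(\det V)^{2}$ equals, up to a sign, the discriminant of $\Phi_{p}$: differentiating $x^{p}-1=(x-1)\Phi_{p}(x)$ and evaluating at $x=\zeta^{j}$ gives $\Phi_{p}'(\zeta^{j})=p\,\zeta^{-j}/(\zeta^{j}-1)$, so $\prod_{j=1}^{p-1}\Phi_{p}'(\zeta^{j})=p^{p-1}\bigl(\prod_{j}\zeta^{-j}\bigr)/\prod_{j}(\zeta^{j}-1)$, and since $\prod_{j}(\zeta^{j}-1)=\pm p$ this is $u'\,p^{p-2}$ for a root of unity $u'\in\mathbb Z[\zeta]$, while $(\det V)^{2}=\pm\prod_{j}\Phi_{p}'(\zeta^{j})$. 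Collecting the factors, $\det\Delta=w\,p^{N-1}$ for some root of unity $w\in\mathbb Z[\zeta]$.

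To conclude, I would observe that $\Delta$ has integer entries, so $\det\Delta\in\mathbb Z$; therefore $w=\det\Delta/p^{N-1}$ is a rational root of unity, hence $w=\pm1$. Thus $\det\Delta=\pm p^{N-1}$, which is a unit of $\mathbb Z[1/p]$ because $N\ge1$, and a square matrix over a commutative ring is invertible as soon as its determinant is a unit; so $\Delta$ is invertible over $\mathbb Z[1/p]$. The argument is elementary throughout; the only point that needs care is that the stray powers of $\zeta$ occurring in $u$, $u'$ and the various sign factors must combine to $\pm1$, but this is forced \emph{a posteriori} by the integrality of $\det\Delta$ rather than needing to be tracked explicitly — so I do not expect a genuine obstacle, only the bookkeeping of the $\zeta$-powers and the standard fact that $\mathrm{disc}(\Phi_{p})=\pm p^{p-2}$ (which the above even re-derives).
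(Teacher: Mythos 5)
Your proof is correct and takes essentially the same approach as the paper: the roots-of-unity filter identity $D_s(N)=\frac1p\sum_{j}\zeta^{-js}(1-\zeta^j)^N$, the factorization of $p\Delta$ into a Vandermonde-type matrix times a character matrix, and the observation that the resulting determinant is (up to sign and a unit of $\mathbb Z[\zeta]$) a power of $p$. The only difference is cosmetic: you pin down $\det\Delta=\pm p^{N-1}$ by invoking the discriminant of $\Phi_p$, whereas the paper stops at $\det\Delta=\pm p^m$ via the norm of $1-\zeta^j$, which already suffices.
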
 
\begin{proof} Denote by $\mu_p$ the set of complex $p$-th roots of unity. Then 
\[D_s(N)=\frac{1}{p}\sum_{\zeta\in\mu_p}\zeta^{-s}(1-\zeta)^N.\] 

Fix a primitive $\zeta\in\mu_p$. Then the matrix $p\Delta$ coincides with the product 
\[\begin{bmatrix} (1-\zeta)^N&(1-\zeta^2)^N&\cdots&(1-\zeta^{p-1})^N\\ 
(1-\zeta)^{N+1}&(1-\zeta^2)^{N+1}&\cdots&(1-\zeta^{p-1})^{N+1}\\ \cdots&\cdots&\cdots&\cdots\\ 
(1-\zeta)^{N+p-2}&(1-\zeta^2)^{N+p-2}&\cdots&(1-\zeta^{p-1})^{N+p-2}\end{bmatrix} 
\begin{bmatrix} \zeta^{-1}&\zeta^{-2}&\cdots&\zeta^{-(p-1)}\\ 
\zeta^{-2}&\zeta^{-4}&\cdots&\zeta^{-2(p-1)}\\ \cdots&\cdots&\cdots&\cdots\\ 
\zeta^{-(p-1)}&\zeta^{-2(p-1)}&\cdots&\zeta^{-(p-1)(p-1)}\end{bmatrix},\] 
so $\det(p\Delta)$ is product of Vandermonde determinants: \[\det(p\Delta)=\prod_{j=1}^{p-1}(1-\zeta^j)^N 
\prod_{1\le j<s\le p-1}(\zeta^j-\zeta^s)\prod_{j=1}^{p-1}\zeta^{-j}\prod_{1\le j<s\le p-1}(\zeta^s-\zeta^j).\] 
As the norm in the extension $\mathbb Q(\zeta)/\mathbb Q$ of the element $1-\zeta^j$ is $p$, we see that 
$\det\Delta=\pm p^m$ for an integer $m\ge 0$, so $\det\Delta$ is invertible in $\mathbb Z[1/p]$. \end{proof} 

\subsection{A filtration} For a left vector space $V$ over a division ring $F$, 
let $G:=\mathrm{GL}_F(V):=\mathrm{Aut}_F(V)$. 
\begin{theorem} \label{general_filtration} Let $\mathbf{r}=(r,r')$ be a pair of cardinals 
$\ge 1$, $F$ be an infinite division ring of characteristic $p\ge 0$, $V$ be a left 
$F$-vector space of dimension $r+r'$. Set $M_0:=\mathbb Z[\mathrm{Gr}(\mathbf{r},V)]^{\circ}$ 
and $G:=\mathrm{GL}_F(V)$. Then there is a sequence of nonzero right $G$-submodules 
$M_0\supseteq M_1\supseteq M_2\supseteq M_3\supseteq\dots$ such that 
\begin{enumerate} \item \label{0=1} $M_0/M_1$ is a free abelian group, 
vanishing if and only if at least one of $r$ and $r'$ is finite; 
\item \label{p-localiz} if $p>0$ then the natural map 
$\mathbb Z[1/p]\otimes M_n\to\mathbb Z[1/p]\otimes M_1$ is surjective for any $n\ge 1$; 
\item \label{K-otimes} for any $n\ge 1$ and any field $K$, 
the natural map $K\otimes M_n\to K\otimes M_1$ is surjective; 

\item \label{any_A} if, for an associative unital ring $A$, a right $A[G]$-submodule $M$ of 
$A\otimes M_0$ contains $\alpha=\sum_{i=0}^Na_i[L_i]$, where $L_0\not\subseteq\bigcup_{i=1}^NL_i$, 
then $M$ contains $Aa_0\otimes M_n$ for some $n$ depending on $N$. \end{enumerate} \end{theorem}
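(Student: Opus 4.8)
The plan is to realize the $M_n$ as the \emph{transvection filtration}. For a nonzero functional $\mu$ on $V$ and a nonzero $w\in\ker\mu$, the transvections $g_t:=1-\mu tw$ ($t\in F$) lie in $G$ by Lemma~\ref{1+t_rank_1} and, since $\mu(w)=0$, form a subgroup $U_{\mu,w}\cong(F,+)$; an $\mathbf r$-subspace $L$ with $w\notin L$ and $\mu|_L\neq0$ is moved by every $g_t$ ($t\neq0$) with $\dim(L/(L\cap g_tL))=1$, its $U_{\mu,w}$-orbit being infinite. I would set $M_n$ to be the $G$-submodule of $M_0$ generated by all $(1-g_{t_1})(1-g_{t_2})\cdots(1-g_{t_n})[L]$ over such data. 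One checks directly that $M_{n+1}\subseteq M_n$ (a generator of $M_{n+1}$ is the difference of the $M_n$-generators $(1-g_{t_1})\cdots(1-g_{t_n})[L]$ and $(1-g_{t_1})\cdots(1-g_{t_n})[g_{t_{n+1}}L]$, the second subspace again being in general position), that $M_1$ is exactly the submodule of Lemma~\ref{2-trans-impl-all} (its generators are the differences $[S]-[S']$ of adjacent $\mathbf r$-subspaces, on which $G$ acts transitively), and that every $M_n$ is nonzero: as $F$ is infinite one can take $t_1,\dots,t_n$ linearly independent over the prime subfield, making the $2^n$ partial sums $\sum_{i\in S}t_i$ pairwise distinct, so the generator is a $\pm1$-combination of $2^n$ distinct points. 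With this definition item~(\ref{0=1}) is a restatement of Lemma~\ref{2-trans-impl-all}: ``$\dim(L/L\cap L')<\infty$'' is connectivity in the adjacency graph on $\mathrm{Gr}(\mathbf r,V)$, $M_1$ is the $\mathbb Z$-span of its edges, so $M_0/M_1$ is free abelian on one fewer than the number of connected components, and there is a single component precisely when $r$ or $r'$ is finite.

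For items~(\ref{p-localiz}) and~(\ref{K-otimes}), fixing $n\geq1$, it is enough to put one standard generator $[g_sL]-[L]$ of $M_1$ into the image of $M_n$, since $M_1$ is cyclic over $G$. If $p>0$: inside one $U_{\mu,w}$ put $e_k:=(1-g_s)^{n+k}[L]$ for $k=0,\dots,p-2$ (with $s\neq0$ and $L$ in general position); these lie in $M_{n+k}\subseteq M_n$. Since $g_{ms}$ depends only on $m\bmod p$ and $\sum_jD_j(N)=(1-1)^N=0$, expanding gives $e_k=\sum_{j=1}^{p-1}D_j(n+k)\,\bigl([g_{js}L]-[L]\bigr)$, i.e.\ $(e_0,\dots,e_{p-2})$ is the matrix $\Delta$ of Lemma~\ref{matrix_D_is_invert} (with $N=n$) applied to the vector of standard generators $[g_{js}L]-[L]$. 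As $\Delta$ is invertible over $\mathbb Z[1/p]$, every $[g_{js}L]-[L]$ lies in $\mathbb Z[1/p]\otimes M_n$; this is item~(\ref{p-localiz}), and it yields item~(\ref{K-otimes}) whenever $p$ is invertible in $K$. The remaining cases of~(\ref{K-otimes}) I would reduce to the two-dimensional lemmas: choosing $\mu,w$ and $L=H\oplus F\ell$ with $H:=L\cap\ker\mu$, $\mu(\ell)=1$, $w\notin L$, the $\mathbf r$-subspaces lying between $H$ and $H\oplus F\ell\oplus Fw$ form a projective line on which $U_{\mu,w}$ acts by translations and $\mathrm{SL}(F\ell\oplus Fw)\cong\mathrm{SL}_2(F)$ (extended by the identity) acts; a depth-$n$ generator of $M_n$ supported on this line is a nonzero element of the $n$-th power of the augmentation ideal in the group-algebra model, so by Lemma~\ref{F_char_p} when $\mathrm{char}\,K=\mathrm{char}\,F=p$ (resp.\ by Lemma~\ref{SL_2Q} applied to the $\mathbb Q$-subline and $\mathrm{SL}_2(\mathbb Q)$ when $\mathrm{char}\,F=0$) it generates the simple module $K[\mathbb P^1]^\circ$, which contains a standard generator of $M_1$.

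For item~(\ref{any_A}) I would induct on $N$, the engine being: from $\alpha$ with leading term $a_0[L_0]$, $L_0\not\subseteq\bigcup_{i\geq1}L_i$, produce an element of $M$ with leading term $a_0(1-g)[L_0]$ and strictly fewer non-leading subspaces. Fix $v\in L_0\setminus\bigcup_{i\geq1}L_i$. If $L_0\not\subseteq\sum_{i\geq1}L_i$, a transvection $g=1-\mu w$ with $\mu|_{\sum_{i\geq1}L_i}=0\neq\mu|_{L_0}$, $\mu(w)=0$, $w\notin L_0$ fixes every $L_i$ ($i\geq1$), so $\alpha-g\alpha=a_0([L_0]-[gL_0])$ and $M\supseteq Aa_0\otimes M_1$, finishing with $n=1$. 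In general one partitions $\{1,\dots,N\}=S_1\sqcup S_2$ and takes $g_s=1-\mu sw$ with $w\in\bigcap_{i\in S_1}L_i\setminus L_0$, $\mu$ vanishing on $\sum_{i\in S_2}L_i$ but not on $L_0$, $\mu(w)=0$, $\mu(v)\neq0$; then $g_s$ fixes $L_i$ for $i\in S_1$, moves $L_0$, and — as $v\in g_sL_i\iff v+\mu(v)sw\in L_i$, which by Lemma~\ref{1+t_rank_1} happens for at most one $s$ per $i$, while $v\in g_sL_0$ only for $s=0$ — for generic $s$ the element $\alpha-g_s\alpha\in M$ has leading term $a_0[L_0]$ with $L_0$ still not contained in the union of its at most $1+2|S_2|$ other subspaces. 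Iterating and feeding the output back into the induction produces, after finitely many steps controlled by $N$, an element $a_0\cdot(1-g_1)\cdots(1-g_k)[L_0]\in M$ with $k=k(N)$, that is, $a_0$ times a depth-$k$ generator, whence $M\supseteq Aa_0\otimes M_n$ with $n=n(N)$.

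The hardest part will be the reduction step of item~(\ref{any_A}) in the case $L_0\subseteq\sum_{i\geq1}L_i$ but $L_0\not\subseteq\bigcup_{i\geq1}L_i$: one must choose the partition $S_1\sqcup S_2$ so that the two requirements $\bigcap_{i\in S_1}L_i\not\subseteq L_0$ and $L_0\not\subseteq\sum_{i\in S_2}L_i$ are both met \emph{with $|S_2|$ small enough that the recursion terminates}, and, having produced the new element, show that the depth-$k$ combination it carries on $L_0$ really generates all of $M_n$ and not merely a proper $G$-submodule of it; I expect the latter to require the $D_s(N)$-identities of Lemma~\ref{matrix_D_is_invert} once more, now to convert this specific combination into a cyclic family of generators of $M_n$. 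A lesser difficulty is bookkeeping the case-split in~(\ref{K-otimes}): the two $2$-dimensional lemmas only supply simplicity for $F=\mathbb Q$ (any $K$) and for $\mathrm{char}\,F=\mathrm{char}\,K=p$, so the case $\mathrm{char}\,F=p$ with $p$ invertible in $K$ has to be routed through item~(\ref{p-localiz}) rather than through a two-dimensional reduction.
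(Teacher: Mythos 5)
Your filtration coincides with the paper's: $(1-g_{t_1})\cdots(1-g_{t_n})[L]$ expands to the alternating sum $\sum_{I\subseteq\{1,\dots,n\}}(-1)^{|I|}[\,U+F(e_0+(\sum_{i\in I}t_i)e_1)\,]$ over the pencil of $\mathbf r$-subspaces between $U$ and $U+Fe_0+Fe_1$, which is exactly the paper's generator $\gamma_n((t_i))$ of $M_n$. Items~(\ref{0=1}), (\ref{p-localiz}) and (\ref{K-otimes}) then proceed essentially as the paper does: (\ref{0=1}) is Lemma~\ref{2-trans-impl-all}, (\ref{p-localiz}) applies the $D_s$-matrix of Lemma~\ref{matrix_D_is_invert} to the consecutive $\gamma_n,\dots,\gamma_{n+p-2}$ with all $t_i$ equal, and (\ref{K-otimes}) routes the case $p>0$, $p$ invertible in $K$, through (\ref{p-localiz}) and the remaining cases (char $F=0$, or char $F=\mathrm{char}\,K=p$) through the two-dimensional Lemmas~\ref{SL_2Q} and~\ref{F_char_p} restricted to an $\mathrm{SL}_2$ acting on the pencil.

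Item~(\ref{any_A}) is where your scheme has a genuine gap, which you have correctly sensed. There are two problems. First, your inductive step needs a partition $\{1,\dots,N\}=S_1\sqcup S_2$, a vector $w\in\bigcap_{i\in S_1}L_i\setminus L_0$, and a functional $\mu$ killing $\sum_{i\in S_2}L_i+Fw$ but not $L_0$; you have not shown such data exist for every configuration of the $L_i$, and there is no obvious bound that makes the recursion terminate. (Note also that under the conditions you state, $g_s$ in fact fixes \emph{every} $L_i$, $i\geq 1$ --- for $i\in S_1$ because $w\in L_i$ and for $i\in S_2$ because $\mu|_{L_i}=0$ --- so $\alpha-g_s\alpha=a_0([L_0]-[g_sL_0])$ already lands in $a_0\otimes M_1$; the ``$1+2|S_2|$ other subspaces'' and the induction never come into play. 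This suggests the scheme as written does not yet capture the hard case.) Second, even granting an iteration, the output $a_0(1-g_1)\cdots(1-g_k)[L_0]$ is not automatically a $\gamma_k$-generator: the $g_j$ arise from different pencils at different stages, so the product is not supported on a single projective line.

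The paper avoids both problems with a single construction. For each $i$ one chooses a rank-one map $\xi_i:V\to Fw$ vanishing on $L_i$ (only on $L_i$; no condition is imposed on $L_j$ for $j\neq i$) and nonzero on $v$. The one operator $\Xi=\sum_{I\subseteq\{1,\dots,N\}}(-1)^{|I|}[1+\xi_I]$ then annihilates $[L_i]$ for \emph{every} $i\geq1$, because in $\Xi[L_i]$ the terms indexed by $I$ and $I\cup\{i\}$ cancel via $\xi_i|_{L_i}=0$; no partition, no induction on $N$. One is left with $\Xi[L_0]$, an alternating sum of hyperplanes of $V'=L_0\dotplus Fw$ that is not visibly a $\gamma_n$. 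The paper converts it into one by passing to the finite-dimensional quotient $V''=V'/\bigcap_i\ker\xi_i$, identifying $\mathrm{Gr}(\dim V''-1,V'')$ with $\mathbb P({V''}^\vee)$ by duality, and then invoking the $r=1$ case already established, via a second $\Xi$-type operator $\beta$ transported back through the anti-involution $[g]\mapsto[g^{-1}]$. That duality reduction --- not another pass through the $D_s$-identities --- is the mechanism you are missing for the second half of your worry.
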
 
\begin{proof} Fix an arbitrary subspace $U\subset V$ of dimension $r-1$ and of codimension $r'+1$. Fix some 
$e_0,e_1\in V$ that are $F$-linearly independent in $V/U$. For each sequence $(t_i)_{i\ge 1}$ 
in $F^{\times}$ and all $n\ge 1$, define 
$\gamma_n((t_i)_{i\ge 1}):=\sum_{I\subseteq\{1,\dots,n\}}(-1)^{|I|}[U+F(e_0+(\sum_{i\in I}t_i)e_1)]\in M_0$. 

Note that (i) the $G$-orbit of $\gamma_n((t_i)_{i\ge 1})$ is independent of a particular 
choice of $U,e_0,e_1$, (ii) $\gamma_{n+1}((t_i)_{i\ge 1})=(1-\xi)\gamma_n((t_i)_{i\ge 1})$ for any 
$\xi\in G$ identical on $U+F\cdot e_1$ and such that 
$\xi e_0=e_0+t_{n+1}e_1$, (iii) all $\gamma_n((t_i)_{i\ge 1})$ are nonzero if, e.g., $t_1,t_2,\dots$ 
are either linearly independent over the prime subfield, or all equal to 1 if $p=0$. 

For each $n\ge 1$, let $M_n$ be the $G$-submodule in $M_0$ generated by the elements 
$\gamma_n((t_i)_{i\ge 1})$ for all sequences $(t_i)_{i\ge 1}$ in 
$F^{\times}$. In particular, we have inclusions $M_n\supseteq M_{n+1}$ for all $n$. 

Then (\ref{0=1}) follows from Lemma~\ref{2-trans-impl-all}: $M_0/M_1$ is the group of formal finite linear 
combinations $\sum_ia_i[L_i]'$, where $a_i\in\mathbb Z$ and $[L]'$ are classes of `commensurable' 
$\mathbf{r}$-subspaces, i.e. $L_0\sim L_1$ if $\dim(L_0/L_0\cap L_1)=\dim(L_1/L_0\cap L_1)<\infty$. 

Set $\gamma_n:=\gamma_n(1,1,1,\dots)=\sum_{s=0}^n(-1)^s\binom{n}{s}[U+F(e_0+se_1)]$. 
As any nonzero $\gamma_1((t_i)_{i\ge 1})$ belongs to the $G$-orbit of $\gamma_1$, 
the $G$-module $M_1$ is generated by $\gamma_1$. 

If $p>0$, it follows from Lemma~\ref{matrix_D_is_invert} that the $\mathbb Z[1/p]$-submodule in 
$\mathbb Z[1/p]\otimes M_0$ generated by $\gamma_n=\sum_{s=0}^{p-1}D_s(n)[U+F(e_0+se_1)],
\gamma_{n+1},\dots,\gamma_{n+p-2}$ contains $\gamma_1$, which implies (\ref{p-localiz}). 

If $\ell:=\mathrm{char}(K)\neq p$ is a prime then $\gamma_{\ell^N}\equiv[U+F\cdot e_0]-[U+F(e_0+\ell^Ne_1)]
=g_N\gamma_1\pmod{\ell M_1}$ for any integer $N\ge 1$ and some $g_N\in G$, 
so $M_n+\ell M_1$ contains $\gamma_1$ for any $n\ge 1$, which proves (\ref{K-otimes}) for 
$K=\mathbb Z/\ell$, and thus, for all fields $K$ of characteristic $\ell$. 

By Lemmas~\ref{SL_2Q} and \ref{F_char_p}, if characteristic of $F$ is 0, or if characteristics 
of $F$ and $K$ coincide, then the $K[G]$-submodule in $K\otimes M_0$ generated by $\gamma_n$ 
coincides with $K\otimes M_1$ for any $n\ge 1$. 

As (\ref{p-localiz}) implies the case of $\mathrm{char}(K)\neq p>0$, 
this completes the proof of (\ref{K-otimes}). 

For (\ref{any_A}), we are going to show that, together with $\alpha=\sum_{i=0}^Na_i[L_i]$, any right 
$A[G]$-submodule $M$ of $A\otimes M_0$ contains $a_0\otimes\gamma_n$ for some $n$ if 
$L_0$ is not contained in $L_i$, $1\le i\le N$. 

It is a folklore result that a nonzero vector space over an infinite division ring cannot be a finite 
union of proper linear subspaces, see e.g. \cite[Theorem 1.2]{Roman}. 
Fix some $v\in L_0\setminus\bigcup_{i=1}^NL_i$, some $w\in V\setminus L_0$. For each $1\le i\le N$, 
fix some $F$-linear morphism $\xi_i:V\to F\cdot w\subset V$ vanishing on $L_i$ but not on $v$.

For each subset $I\subseteq\{1,\dots,N\}$, the image of the endomorphism $\xi_I:=\sum_{i\in I}\xi_i$ 
of $V$ is $F\cdot w$, i.e., $\xi_I=\lambda_I\cdot w$ for a linear functional $\lambda_I$ on $V$, 
so by Lemma \ref{1+t_rank_1} there is at most one value of $t$ such that $1+\lambda_Itw$ is not 
invertible. As $F$ is infinite, we may therefore replace $w$ with a nonzero multiple (equivalently, 
replace $\lambda_i$'s with a common right multiple), so that $1+\xi_I=1+\lambda_I\cdot w$ 
become invertible for all subsets $I\subseteq\{1,\dots,N\}$. 

Then the element $\Xi:=\sum_{I\subseteq\{1,\dots,N\}}(-1)^{|I|}[1+\xi_I]\in\mathbb Z[G]$ 
annihilates all $[L_i]$ for $1\le i\le N$, and 
\[\Xi[L_0]=\sum_{I\subseteq\{1,\dots,N\}}(-1)^{|I|}\left[L_0\cap\ker\xi_I\dotplus 
F\cdot\left(v+(\sum_{i\in I}\lambda_i(v))w\right)\right].\] 
In particular, (i) if $r=1$ and $\lambda_i=t_i$ for all $1\le i\le N$ then 
$g\Xi\alpha=a_0\otimes\gamma_N((t_i)_{i\ge 1})$ for any $g\in G$ such that $gv=e_0$ and $gw=e_1$; 
(ii) if $\sum_{i\in I}\lambda_i\neq 0$ for any non-empty $I$ then 
$\Xi[L_0]=[L_0]+\sum_{n=1}^Mb_n[L_n']$ for some hyperplanes $L_n'\neq L_0$ in $V':=L_0\dotplus F\cdot w$ 
and some $b_n\in\mathbb Z$. 

Set $V'':=V'/\bigcap_{n=1}^N\ker\xi_n$, which is of dimension $\le N$. Denote by ${V''}^{\vee}$ 
the dual space of $V''$. Consider the canonical identification 
$\psi:\mathrm{Gr}(\dim V''-1,V'')\xrightarrow{\sim}\mathbb P({V''}^{\vee})$, sending each hyperplane $L$ 
in $V''$ to the line of all linear functionals vanishing on $L$. Then $\psi(g^{-1}[L])=g^*\psi([L])$ for 
all $g\in\mathrm{GL}_F(V'')$ (or their lifts in $\mathrm{GL}_F(V'')$), where $(g^*\lambda)(x):=\lambda(gx)$ 
for all $\lambda\in{V''}^{\vee}$ and $g\in\mathrm{GL}_F(V'')$. 

Then $\psi$ identifies the image of $\Xi[L_0]=[L_0]+\sum_{n=1}^Mb_n[L_n']$ in 
$\mathbb Z[\mathrm{Gr}(\dim V''-1,V'')]^{\circ}$ with 
$[q_0]+\sum_{n=1}^Mb_n[q_n]\in\mathbb Z[\mathbb P({V''}^{\vee})]^{\circ}$. As we have just seen in the case 
$r=1$ (with ${V''}^{\vee}$ instead of $V$), for any sequence $(t_i)_{i\ge 1}$ in $F^{\times}$, there is an 
element $\beta\in\mathbb Z[\mathrm{GL}_F(V'')]$ (similar to the element $\Xi$) such that $\beta 
([q_0]+\sum_{n=1}^Mb_n[q_n])=\sum_{I\subseteq\{1,\dots,M\}}(-1)^{|I|}[F\cdot(e_0''+(\sum_{i\in I}t_i)e_1'')]$ 
for some $F$-linearly independent $e_0'',e_1''\in V''$. Denote by $\beta^*$ the image of $\beta$ under the 
anti-involution $\mathbb Z[\mathrm{GL}_F(V'')]\to\mathbb Z[\mathrm{GL}_F(V'')]$, $[g]\mapsto[g^{-1}]$. 
Then, for any linear combination $\beta'$ of elements of $G$ identical on 
$\mathbb P(\bigcap_{n=1}^N\ker\xi_n)$ and extending $\beta^*$, one has 
$\beta'(\Xi[L_0])=\gamma_M((t_i)_{i\ge 1})$, and thus, $\beta'(\Xi\alpha)=a_0\otimes\gamma_M((t_i)_{i\ge 1})$. 
\end{proof}

\begin{corollary} \label{two_fields_dist_char} Let $\mathbf{r}=(r,r')$ be a pair of cardinals $\ge 1$, $F$ 
be an infinite division ring, $K$ be a field, $V$ be a left $F$-vector space of dimension $r+r'$. 

Then any nonzero $K$-subrepresentation of $\mathrm{GL}_F(V)$ in $K[\mathrm{Gr}(\mathbf{r},V)]$ contains 
$K\otimes M_1$. 

In particular, $K\otimes M_1$ is the only irreducible 
$K$-subrepresentation of $\mathrm{GL}_F(V)$ in $K[\mathrm{Gr}(\mathbf{r},V)]$. 

The following conditions are equivalent: {\rm (i)} at least one of $r$ and $r'$ is finite, 
{\rm (ii)} $M_1=\mathbb Z[\mathrm{Gr}(\mathbf{r},V)]^{\circ}$, 
{\rm (iii)} $K\otimes M_1=K[\mathrm{Gr}(r,V)]^{\circ}$, 
{\rm (iv)} $K[\mathrm{Gr}(\mathbf{r},V)]^{\circ}$ is irreducible, 
{\rm (v)} $K[\mathrm{Gr}(\mathbf{r},V)]/K\otimes M_1$ is irreducible, 
{\rm (vi)} $K[\mathrm{Gr}(\mathbf{r},V)]/K\otimes M_1\cong K$. \end{corollary}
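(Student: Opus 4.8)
The plan is to deduce everything from Theorem~\ref{general_filtration} together with Lemmas~\ref{2-trans-impl-all} and \ref{Homs}. First I would establish the main assertion: if $M\subseteq K[\mathrm{Gr}(\mathbf{r},V)]$ is a nonzero $K[G]$-submodule, then $M\supseteq K\otimes M_1$. Pick $0\ne\alpha=\sum_{i=0}^N a_i[L_i]\in M$ with the $L_i$ pairwise distinct and, after relabeling, $a_0\ne 0$; since the $L_i$ are distinct $\mathbf{r}$-subspaces, the folklore fact that an infinite-dimensional... more precisely, that $L_0$ (being a subspace not equal to any $L_i$) is not contained in the finite union $\bigcup_{i=1}^N L_i$, applies here because $L_0\not\subseteq L_i$ for each $i$ (a proper containment would force equality of $\mathbf{r}$-subspaces), and a finite union of proper subspaces of $L_0$ cannot exhaust $L_0$ over an infinite $F$; this is exactly the hypothesis of part~(\ref{any_A}) of the Theorem. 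Hence $M$ contains $K a_0\otimes M_n=K\otimes M_n$ for some $n$ depending on $N$, and then part~(\ref{K-otimes}) gives $K\otimes M_n\twoheadrightarrow K\otimes M_1$, i.e.\ $M\supseteq K\otimes M_1$. This proves the first displayed claim, and the ``in particular'' clause follows: $K\otimes M_1$ is contained in every nonzero submodule, so any irreducible subrepresentation must equal it, and it only remains to note $K\otimes M_1$ is itself irreducible because any nonzero submodule of it contains $K\otimes M_1$ by what we just proved.

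Next I would verify the chain of equivalences. The implication (i)$\Rightarrow$(ii) is precisely the last sentence of Lemma~\ref{2-trans-impl-all}. For (ii)$\Rightarrow$(iii) simply tensor with $K$: $K\otimes\mathbb Z[\mathrm{Gr}(\mathbf{r},V)]^\circ=K[\mathrm{Gr}(\mathbf{r},V)]^\circ$ since $K$ is flat over $\mathbb Z$. The implication (iii)$\Rightarrow$(iv) follows from the irreducibility of $K\otimes M_1$ established above. For (iv)$\Rightarrow$(i): if $r$ and $r'$ are both infinite, then by Theorem~\ref{general_filtration}(\ref{0=1}) the quotient $M_0/M_1$ is a nonzero free abelian group, hence $K\otimes M_0/K\otimes M_1$ is a nonzero $K$-vector space, so $K\otimes M_1$ is a proper nonzero submodule of $K[\mathrm{Gr}(\mathbf{r},V)]^\circ=K\otimes M_0$, contradicting irreducibility. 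This closes the loop (i)$\Leftrightarrow$(ii)$\Leftrightarrow$(iii)$\Leftrightarrow$(iv).

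For the remaining two conditions involving the quotient $Q:=K[\mathrm{Gr}(\mathbf{r},V)]/K\otimes M_1$, observe that $\eta_0^{\mathbf{r},0}:K[\mathrm{Gr}(\mathbf{r},V)]\to K$, $[L]\mapsto 1$, has kernel $K[\mathrm{Gr}(\mathbf{r},V)]^\circ\supseteq K\otimes M_1$, so it factors through $Q$ and gives a surjection $Q\twoheadrightarrow K$; its kernel is $K[\mathrm{Gr}(\mathbf{r},V)]^\circ/K\otimes M_1$. Thus (vi) holds iff this kernel vanishes iff (iii) holds, giving (iii)$\Leftrightarrow$(vi); and (vi)$\Rightarrow$(v) is trivial since $K$ is the trivial irreducible. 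Finally for (v)$\Rightarrow$(vi): the surjection $Q\twoheadrightarrow K$ shows that if $Q$ is irreducible then $Q\cong K$. This completes all equivalences.

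I do not anticipate a serious obstacle; the only point requiring a little care is checking that the hypothesis ``$L_0\not\subseteq\bigcup_{i=1}^N L_i$'' of Theorem~\ref{general_filtration}(\ref{any_A}) is automatically satisfied for a linear combination of \emph{distinct} $\mathbf{r}$-subspaces, which as indicated above follows because $L_0\ne L_i$ forces $L_0\not\subseteq L_i$ (two subspaces of the same dimension and codimension, one contained in the other, coincide), combined with the cited fact that $L_0$ is not a finite union of its proper subspaces.
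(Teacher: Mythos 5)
Your overall strategy is exactly the intended one (the paper gives no written proof of the corollary, which it treats as a direct consequence of Theorem~\ref{general_filtration} and Lemma~\ref{2-trans-impl-all}), and the chain of equivalences is handled correctly. But there is one genuine error in the verification of the hypothesis of Theorem~\ref{general_filtration}(\ref{any_A}), and one smaller mismatch with the theorem's stated hypotheses.

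The error: you justify $L_0\not\subseteq L_i$ by asserting that two distinct subspaces of the same dimension and codimension cannot be nested. That is true if $r$ or $r'$ is finite, but fails when both are infinite: for $V=F^{(\mathbb N)}$ with basis $(e_n)$, take $L_0=\langle e_0,e_2,e_4,\dots\rangle$ and $L_1=L_0\dotplus F e_1$; both have dimension and codimension $\aleph_0$, yet $L_0\subsetneq L_1$. Since the case where $r$ and $r'$ are both infinite is precisely the case where the whole theorem has nontrivial content (it is where $M_1\ne M_0$), this cannot be waved away. The fix is short but requires a different idea: among the finitely many distinct $L_0,\dots,L_N$ (all with nonzero coefficients), pick $L_j$ \emph{maximal} with respect to inclusion. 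Then $L_j\not\subseteq L_i$ for each $i\ne j$ by maximality, and the folklore fact that a nonzero vector space over an infinite division ring is not a finite union of proper subspaces gives $L_j\not\subseteq\bigcup_{i\ne j}L_i$. Relabel so that $L_j$ is the new $L_0$.

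The smaller point: Theorem~\ref{general_filtration}(\ref{any_A}) is stated for a submodule $M$ of $A\otimes M_0$, but the corollary concerns an arbitrary nonzero submodule $M\subseteq K[\mathrm{Gr}(\mathbf{r},V)]$, which need not lie in $K[\mathrm{Gr}(\mathbf{r},V)]^{\circ}$. You should either note that the theorem's proof in fact works verbatim for any $M\subseteq A[\mathrm{Gr}(\mathbf{r},V)]$ (the element $\Xi\alpha$ it produces lands in $A\otimes M_0$ regardless), or first reduce to $M\cap K[\mathrm{Gr}(\mathbf{r},V)]^{\circ}\ne 0$: if $M\cap K[\mathrm{Gr}(\mathbf{r},V)]^{\circ}=0$ then $M$ maps injectively to $K[\mathrm{Gr}(\mathbf{r},V)]/K[\mathrm{Gr}(\mathbf{r},V)]^{\circ}\cong K$, so $M$ is the trivial one-dimensional module, forcing a nonzero $G$-invariant vector in $K[\mathrm{Gr}(\mathbf{r},V)]$; but $G$ acts transitively on the infinite set $\mathrm{Gr}(\mathbf{r},V)$, so no such vector exists. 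Everything else — the irreducibility of $K\otimes M_1$, (i)$\Leftrightarrow$(ii) from Lemma~\ref{2-trans-impl-all}, (ii)$\Rightarrow$(iii) by flatness, (iv)$\Rightarrow$(i) via Theorem~\ref{general_filtration}(\ref{0=1}), and (iii)$\Leftrightarrow$(vi)$\Leftrightarrow$(v) via the augmentation — is correct.
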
 

\section{Some remarks on the case of a finite base field} \label{finite_base_field} 

There is an extensive literature on representations of finite Chevalley groups, 
see e.g. \cite{Alperin,Benson}. 
For this reason we do not treat in detail the case where $F$ is a finite field.

\subsection{The case of characteristic 0 coefficient field} 
\begin{proposition} Let $F=\mathbb F_q$ be a finite field, $V$ be a finite $F$-vector space, 
$K$ be a field where $q^n\neq q$ for $2\le n\le\dim V+1$ (e.g., 
$\mathrm{char}(K)=0$), $r\ge 1$ be an integer. Then the $K[\mathrm{PGL}(V)]$-module $K[\mathrm{Gr}(r,V)]$ 
is a sum of $\min(r,\dim V-r)+1$ pairwise distinct simple submodules. \end{proposition}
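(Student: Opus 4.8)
The plan is to exploit the $q$-analogue of the theory from Lemma~\ref{Homs} and Remark~\ref{duality_finite_V}. Since $V$ is finite, the pairing $([L],[L'])=\delta_{L,L'}$ identifies $K[\mathrm{Gr}(r,V)]$ with its own dual as a $K[\mathrm{PGL}(V)]$-module, so it suffices to show that the module is semisimple and has exactly $\min(r,\dim V-r)+1$ isotypic components. First I would invoke the commutativity of the Hecke-type endomorphism algebra $E:=\mathrm{End}_{K[\mathrm{PGL}(V)]}(K[\mathrm{Gr}(r,V)])$ established in Lemma~\ref{Homs}: it is spanned by the operators $\eta_s^{r,r}$ for $0\le s\le r$ (with the convention $\sigma=0$ when $V$ is finite and $r\le\dim V-r$; the general range is $\max(0,2r-\dim V)\le s\le r$), so $\dim_K E=\min(r,\dim V-r)+1$. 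The strategy is then: (a) show $E$ is semisimple, i.e. a product of copies of $K$; (b) deduce that $K[\mathrm{Gr}(r,V)]$ is semisimple with exactly $\dim_K E$ isotypic components; (c) observe that each isotypic component is in fact a single simple module, so the count of simple summands is $\dim_K E$.

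For step (a), the key point is that under the hypothesis $q^n\neq q$ in $K$ for $2\le n\le\dim V+1$, the relevant eigenvalue computation degenerates nicely. Concretely, one has the classical eigenvalue formula for the ``adjacency-type'' operators on the Grassmann scheme: the operator $\eta_1^{r,r}$ (incidence through a common hyperplane of $L$) acts on the $K[\mathrm{PGL}(V)]$-submodule corresponding to the $j$-th piece ($0\le j\le\min(r,\dim V-r)$) with eigenvalue a fixed polynomial in $q$, and these $\min(r,\dim V-r)+1$ eigenvalues are pairwise distinct precisely because the pairwise differences are, up to units, products of factors of the form $q^n-1$ or $q^n-q$ with $n$ in the stated range, hence nonzero in $K$. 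Thus $\eta_1^{r,r}$ has $\dim_K E$ distinct eigenvalues on a module of ``$E$-length'' $\dim_K E$, forcing $E\cong K^{\min(r,\dim V-r)+1}$ and simultaneously splitting $K[\mathrm{Gr}(r,V)]$ into that many $E$-eigenspaces. I would set this up by base-changing to an algebraic closure of $K$ (harmless for counting simple summands if one then checks the idempotents are already defined over $K$, which follows since the eigenvalues lie in the prime field's image), decomposing $K[\mathrm{Gr}(r,V)]\otimes_{\mathbb Q}\overline{\mathbb Q}$ — or rather working with the known characteristic-zero decomposition into the irreducibles $S^{(\dim V-j,\,r-j)}$-type constituents of $\mathrm{PGL}(V)$ over $\mathbb C$ — and transporting the eigenvalue formula.

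For steps (b)–(c): once $E$ is a product of fields, the functor $M\mapsto e_jM$ for the primitive idempotents $e_j\in E$ splits $K[\mathrm{Gr}(r,V)]=\bigoplus_{j}e_jK[\mathrm{Gr}(r,V)]$; self-duality of the whole module under the Remark~\ref{duality_finite_V} pairing, together with the fact that distinct $e_j$-components are non-isomorphic (again because the $\eta_1^{r,r}$-eigenvalues differ), gives that each $e_jK[\mathrm{Gr}(r,V)]$ is self-dual, and its endomorphism ring is $e_jE\cong K$; a self-dual module with endomorphism ring a field over which it need not be split is nonetheless semisimple here because it is a summand of a permutation module and one can exhibit an invariant complement to its radical via the pairing (the radical being isotropic and the pairing nondegenerate). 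Concluding each component is actually irreducible: this is where I would appeal to the known generic decomposition — over $\mathbb C$ the component $e_jK[\mathrm{Gr}(r,V)]$ is a single irreducible $\mathrm{PGL}(V)$-module, and since the decomposition pattern is controlled by the same Hecke algebra $E$ whose structure we have pinned down to be identical (a product of $\dim_K E$ copies of the coefficient field) in both the $\mathbb C$ case and our $K$, the irreducibility transfers.

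The main obstacle I anticipate is step (a): making the eigenvalue/distinctness computation precise and genuinely checking that every pairwise difference of eigenvalues of $\eta_1^{r,r}$ is a unit in $K$ under exactly the hypothesis ``$q^n\neq q$ for $2\le n\le\dim V+1$'' — this requires the explicit $q$-Eberlein/Delsarte eigenvalue polynomials for the Grassmann scheme and a careful factorization of their differences. A secondary subtlety is ensuring semisimplicity of each isotypic piece when $K$ is not algebraically closed and possibly of positive characteristic dividing $|\mathrm{PGL}(V)|$; here the permutation-module self-duality argument must be carried out so as not to secretly assume Maschke's theorem.
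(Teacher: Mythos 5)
The decisive observation you are missing is that the numerical hypothesis is exactly a Maschke condition. Writing $d=\dim V$, the hypothesis $q^n\neq q$ in $K$ for $2\le n\le d+1$ unpacks (since $q^n-q=q(q^{n-1}-1)$) to $q\neq 0$ in $K$ together with $q^m\neq 1$ in $K$ for $1\le m\le d$; and $|\mathrm{PGL}_d(\mathbb F_q)|=q^{\binom{d}{2}}\prod_{i=2}^{d}(q^i-1)$, so the hypothesis makes $|\mathrm{PGL}(V)|$ invertible in $K$. Hence $K[\mathrm{PGL}(V)]$ is semisimple and $K[\mathrm{Gr}(r,V)]$ is a semisimple module for free. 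The paper's proof is just this sentence plus Lemma~\ref{Homs}: semisimplicity makes $E:=\mathrm{End}_{K[\mathrm{PGL}(V)]}(K[\mathrm{Gr}(r,V)])$ semisimple; commutativity then forces multiplicity one; and $\dim_K E=\min(r,d-r)+1$ gives the count. Your proposal recreates a large amount of Grassmann-scheme eigenvalue theory to avoid using Maschke, when Maschke is precisely what the hypothesis is there to provide.

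Worse, the replacement you offer for Maschke does not actually work. In step (b) you deduce ``$K[\mathrm{Gr}(r,V)]$ is semisimple'' from ``$E$ is a split commutative semisimple algebra,'' but semisimplicity of $\mathrm{End}(M)$ does not imply semisimplicity of $M$ (an indecomposable non-simple module can have endomorphism ring $K$). You try to patch this with the self-duality pairing of Remark~\ref{duality_finite_V}, claiming the radical is isotropic and the pairing yields an invariant complement. A nondegenerate invariant pairing on a finite-length module gives $(\mathrm{rad}\,M)^{\perp}=\mathrm{soc}\,M$, nothing more; the radical need not be isotropic, and even when it is there is in general no invariant complement to it (otherwise every self-dual module with $\mathrm{rad}\subseteq\mathrm{soc}$ would be semisimple, which is false already for uniserial self-dual modules of length two in modular characteristic). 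So as written, step (b) is a genuine gap, not a subtlety to be finessed.

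On the positive side, you correctly flag something the paper's proof leaves tacit: from ``$M$ semisimple, $E$ commutative of $K$-dimension $N$'' one only gets that $M$ is a sum of \emph{at most} $N$ pairwise distinct simples; equality requires $E\cong K^{N}$, i.e.\ that each simple constituent has endomorphism field $K$. Your plan to verify this via distinctness of the integer eigenvalues of $\eta_1^{r,r}$ (the $q$-Eberlein polynomials) under the hypothesis $q^m\neq 1$ for $1\le m\le d$ is a reasonable way to close that gap, though once Maschke is in hand a softer route is available: Lemma~\ref{Homs} also computes $\mathrm{Hom}_{K[\mathrm{PGL}(V)]}(K[\mathrm{Gr}(j,V)],K[\mathrm{Gr}(j+1,V)])$ for all $j$, and an easy induction on $j$ using these dimensions shows every new simple constituent of $K[\mathrm{Gr}(j+1,V)]$ has endomorphism ring $K$, whence $E\cong K^{N}$ without any eigenvalue computation. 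The short version: keep your care about the splitting of $E$, drop the self-duality detour, and let Maschke do the work the hypothesis was designed for.
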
 
\begin{proof} As the $K[\mathrm{PGL}(V)]$-modules are semisimple, it suffices to apply 
Lemma~\ref{Homs} asserting that the algebra $\mathrm{End}_{K[\mathrm{PGL}(V)]}(K[\mathrm{Gr}(r,V)])$ 
is commutative and of dimension $\min(r,\dim V-r)+1$ as $K$-vector space. \end{proof} 

\begin{proposition} Let $F$ be a finite field, $V$ be an infinite $F$-vector space, $K$ be a field of 
characteristic 0, $r\ge 1$ be an integer. Then both $K[\mathrm{PGL}(V)]$-modules, $K[\mathrm{Gr}(r,V)]$ and 
$K[\mathrm{Gr}((\dim V,r),V)]$, admit unique composition series, both of length $r+1$. \end{proposition}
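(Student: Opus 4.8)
The plan is to compare the two Grassmannians $\mathrm{Gr}(r,V)$ and $\mathrm{Gr}((\dim V,r),V)$ — note that $(\dim V,r)$ means the pair $(r,r')$ with $r=\dim V$ and $r'=r$, so this second space is $\mathrm{Gr}((\dim V,r),V)$, the set of subspaces of $V$ of dimension $\dim V$ and codimension $r$, i.e. it parametrizes $r$-dimensional \emph{quotients} of $V$, equivalently $r$-dimensional subspaces of the dual when $V$ is finite; for infinite $V$ these are genuinely different $G$-sets. The first task is to assemble, for each $k$ with $0\le k\le r$, the canonical submodule $M_1(\mathbf{r}_k,V)$ of Lemma~\ref{2-trans-impl-all} associated to the appropriate pair of cardinals, and to show that the $M_1$'s, suitably nested via the $\eta$-maps of \S\ref{morphisms}, furnish a filtration of $K[\mathrm{Gr}(r,V)]$ with $r+1$ layers. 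The key input here is Corollary~\ref{two_fields_dist_char}: every nonzero $K[G]$-submodule of $K[\mathrm{Gr}(\mathbf{r}_k,V)]$ contains the unique simple submodule $K\otimes M_1(\mathbf{r}_k,V)$. Iterating this — passing to successive quotients built from the kernels and images of the operators $\eta^{\mathbf{r}_k,\mathbf{r}_{k-1}}_{\mathbf{s}}$ and $\eta^{\mathbf{r}_k,\mathbf{r}_0}_0$ listed in Lemma~\ref{Homs} — one gets that each such quotient again has a unique simple submodule, which forces the composition series to be unique if it exists.

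Next I would pin down the length. Because $F$ is finite, Lemma~\ref{Homs} tells us that $\mathrm{Hom}_{K[G]}(K[\mathrm{Gr}(\mathbf{r}_0,V)],K[\mathrm{Gr}(\mathbf{r}_1,V)])$ is free of rank governed by the combinatorics of $\mathrm{St}_{[L]}$-orbits; in particular for $V$ infinite and $F$ finite the nonzero Hom-spaces between these Grassmannian modules are one-dimensional and realized by a single $\eta$. Using the composite relations $\eta^{\mathbf{r}',\mathbf{r}''}_{s'}\eta^{\mathbf{r},\mathbf{r}'}_{s}=\sum N_{L,L''}[L'']$ computed in the proof of Lemma~\ref{Homs}, one checks that the chain of kernels
\[
K[\mathrm{Gr}(r,V)]\supsetneq \ker\big(\eta^{\mathbf{r}_r,\mathbf{r}_{r-1}}\big)\supsetneq\cdots\supsetneq\ker\big(\eta^{\mathbf{r}_r,\mathbf{r}_0}_0\big)=K[\mathrm{Gr}(r,V)]^{\circ}\supsetneq\cdots
\]
has exactly $r+1$ distinct terms, each consecutive quotient being a single ``new'' simple module of the form $K\otimes M_1$ for the relevant cardinal pair (this uses characteristic $0$ to rule out the extra length-contributions from $M_0/M_1$ that only matter when \emph{neither} of the cardinals is finite — here $r$ is a finite integer, so part~(\ref{0=1}) of Theorem~\ref{general_filtration} gives $M_0=M_1$ at each stage). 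The same analysis applied to $\mathrm{Gr}((\dim V,r),V)$, whose point stabilizers have the mirror-image orbit structure, yields the same count $r+1$.

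Finally I would verify uniqueness of the composition series. The mechanism is: a module all of whose nonzero submodules share a single common simple submodule has at most one composition series, and this property propagates to quotients because the $\eta$-operators, being the only morphisms available by Lemma~\ref{Homs}, identify each subquotient with a single Grassmannian-type module to which Corollary~\ref{two_fields_dist_char} again applies. The main obstacle I anticipate is the bookkeeping at the two ends of the filtration and the matching of the two Grassmannians: one must be careful that $\mathrm{Gr}((\dim V,r),V)$ is \emph{not} literally $\mathrm{Gr}(\dim V - r, V)$ when $V$ is infinite (dimension and codimension are independent data), so the orbit counts in Lemma~\ref{Homs} must be read off for the correct pair $(r_0,r_0')=(\dim V,r)$; getting the interplay of the ``$\eta_{(r_0,0,r_0'-r_1')}$'' and ``$\eta_{(r_1,r_0-r_1,0)}$'' cases right, and confirming that in characteristic $0$ no subquotient splits and none acquires extra length, is where the real work lies. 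Everything else is the formal consequence of Corollary~\ref{two_fields_dist_char} plus the commutativity and freeness statements of Lemma~\ref{Homs}.
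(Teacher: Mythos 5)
Your argument leans on Corollary~\ref{two_fields_dist_char} and Theorem~\ref{general_filtration}, both of which are proved only under the hypothesis that $F$ is an \emph{infinite} division ring, whereas the present proposition has $F$ finite, so neither applies. Worse, the conclusion of Corollary~\ref{two_fields_dist_char} is actually false in this setting: since $r$ is a finite integer, Lemma~\ref{2-trans-impl-all} gives $K\otimes M_1 = K[\mathrm{Gr}(r,V)]^{\circ}$, which, by the very proposition you are proving, has composition length $r$ and so is \emph{not} simple once $r>1$; its unique simple submodule is the much smaller $K\otimes\ker\eta_{r-1}^{r,r-1}$, not $K\otimes M_1$. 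The ``unique simple submodule at each stage'' that your strategy needs is therefore not the one this machinery would hand you. (Your displayed chain of kernels also has the inclusions reversed: $\ker\eta_{r-1}^{r,r-1}$ is the \emph{smallest} nonzero term and is contained in $\ker\eta_0^{r,0}=K[\mathrm{Gr}(r,V)]^{\circ}$, not the other way around.)

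The paper does not attempt a self-contained derivation here. It cites \cite[Theorem A.17]{H90}, a classification of the smooth $K$-representations of $\mathrm{GL}(V)$ for a countable $F$-vector space with $F$ finite, and simply records the composition series this yields, namely $0\subset K\otimes\ker\eta_{r-1}^{r,r-1}\subset\cdots\subset K\otimes\ker\eta_0^{r,0}\subset K[\mathrm{Gr}(r,V)]$ and the analogous chain $\Phi_1\subset\cdots\subset\Phi_r\subset K[\mathrm{Gr}((\dim V,r),V)]$ with $\Phi_n=K\otimes\ker\eta_{(\dim V,0,n)}^{(\dim V,r),(\dim V,r-n)}$. The composition factors are the successive subquotients $\ker\eta_{s-1}^{r,s-1}/\ker\eta_s^{r,s}$, not modules of the form $K\otimes M_1(\mathbf{r}_k,V)$ as your sketch proposes. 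To produce a self-contained argument you would need a finite-field analogue of Corollary~\ref{two_fields_dist_char} identifying the unique simple submodule of each such subquotient --- and that is precisely the content one would have to re-derive from, or import from, \cite{H90}.
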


This follows from a description of the smooth $K$-representations of 
$\mathrm{GL}(V)$ for a countable $F$-vector space $V$ given in \cite[Theorem A.17]{H90}. The 
corresponding composition series are 
\[0\subset K\otimes\ker\eta_{r-1}^{r,r-1}\subset K\otimes\ker\eta_{r-2}^{r,r-2}\subset\dots\subset 
K\otimes\ker\eta_1^{r,1}\subset K\otimes\ker\eta_0^{r,0}\subset K[\mathrm{Gr}(r,V)]\quad\mbox{and}\]  
$0\subset\Phi_1\subset\Phi_2\subset\dots\subset\Phi_{r-1}\subset\Phi_r\subset K[\mathrm{Gr}((\dim V,r),V)]$, 
where $\Phi_n=K\otimes\ker\eta_{(\dim V,0,n)}^{(\dim V,r),(\dim V,r-n)}$.

\subsection{The case of positive characteristic coefficient field} 
\begin{proposition} \label{simple_dim_1} Let $K$ be a field of characteristic $\ell$. Let $F$ 
be a union of finite fields. \begin{enumerate} 
\item \label{finite_case} Suppose that $V$ is finite and either $\dim\mathbb P(V)=1$ or 
$\ell$ is not characteristic of $F$. \begin{itemize} \item If $\ell$ does not divide $|\mathbb P(V)|$ 
then $K[\mathbb P(V)]=K[\mathbb P(V)]^{\circ}\oplus K\cdot\sum_{x\in\mathbb P(V)}[x]$ 
is the sum of two simple $K[\mathrm{PGL}(V)]$-submodules. 
\item If $\ell$ divides $|\mathbb P(V)|$ then $K\cdot\sum_{x\in\mathbb P(V)}[x]$ 
is the only simple submodule of $K[\mathbb P(V)]$, 
while $K[\mathbb P(V)]^{\circ}/K\cdot\sum_{x\in\mathbb P(V)}[x]$ 
is the only simple submodule of $K[\mathbb P(V)]/K\cdot\sum_{x\in\mathbb P(V)}[x]$. \end{itemize} 
\item \label{PV_is_infinite} If $\mathbb P(V)$ is infinite and either $\dim V=2$ or characteristic 
of $F$ is not $\ell$ then $K[\mathbb P(V)]^{\circ}$ is the only simple 
$K[\mathrm{PGL}(V)]$-submodule of $K[\mathbb P(V)]$. \end{enumerate} \end{proposition}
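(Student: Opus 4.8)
The plan is to reduce every case to the situation where $V$ is two-dimensional, where Lemmas~\ref{SL_2Q} and~\ref{F_char_p} (together with the $\mathrm{char}(K)\ne\mathrm{char}(F)$ input hiding in Corollary~\ref{two_fields_dist_char}) already do the work, and then bootstrap to arbitrary $V$ using the transitivity of $\mathrm{PGL}(V)$ on pairs of distinct points of $\mathbb P(V)$. First I would dispose of the finite case~(\ref{finite_case}). Here $F$ is a finite field $\mathbb F_q$, so $\mathrm{PGL}(V)$ is a finite group and the only subtlety is that $K[\mathrm{PGL}(V)]$ need not be semisimple when $\ell\mid|\mathrm{PGL}(V)|$. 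The $K[\mathrm{PGL}(V)]$-module $K[\mathbb P(V)]$ always contains the trivial line $K\cdot\sum_{x}[x]$ and the hyperplane $K[\mathbb P(V)]^{\circ}$ of combinations summing to zero; these two meet in $0$ exactly when $\ell\nmid|\mathbb P(V)|$, in which case $K[\mathbb P(V)]=K[\mathbb P(V)]^{\circ}\oplus K\cdot\sum_x[x]$. Simplicity of $K[\mathbb P(V)]^{\circ}$ under the stated hypothesis ($\dim\mathbb P(V)=1$, i.e.\ $\dim V=2$; or $\ell\ne\mathrm{char}(F)$) I would get from the argument of Lemma~\ref{F_char_p} when $\ell=\mathrm{char}(F)$ and $\dim V=2$, and from Corollary~\ref{two_fields_dist_char} (applied with this finite $V$) when $\ell\ne\mathrm{char}(F)$: a nonzero submodule contains $K\otimes M_1$, which here equals $K[\mathbb P(V)]^{\circ}$ since $r=1$ is finite. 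When $\ell\mid|\mathbb P(V)|$ one has $K\cdot\sum_x[x]\subset K[\mathbb P(V)]^{\circ}$, and the claim is that this line is the socle of $K[\mathbb P(V)]$ and that $K[\mathbb P(V)]^{\circ}/K\cdot\sum_x[x]$ is the socle of the quotient; the second half is the dual statement under the pairing of Remark~\ref{duality_finite_V}, so it suffices to prove the first. That amounts to: any simple submodule $S$ of $K[\mathbb P(V)]$ equals $K\cdot\sum_x[x]$. If $S\not\subseteq K[\mathbb P(V)]^{\circ}$ then $S$ maps onto $K$ under the augmentation, and since it is simple it is a trivial line, hence $S=K\cdot\sum_x[x]$ because $K[\mathbb P(V)]^{\mathrm{PGL}(V)}=K\cdot\sum_x[x]$; if $S\subseteq K[\mathbb P(V)]^{\circ}$, I would run the generation argument of Lemma~\ref{F_char_p} (in characteristic $\ell=\mathrm{char}(F)$, $\dim V=2$) or of Corollary~\ref{two_fields_dist_char} (when $\ell\ne\mathrm{char}(F)$) to show any nonzero element of $K[\mathbb P(V)]^{\circ}$ generates a submodule containing $K\cdot\sum_x[x]$, so $S\supseteq K\cdot\sum_x[x]$ and by simplicity $S=K\cdot\sum_x[x]$.

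Next, the infinite case~(\ref{PV_is_infinite}): $\mathbb P(V)$ infinite and either $\dim V=2$ or $\ell\ne\mathrm{char}(F)$. Now $r=1$ is finite, so by Lemma~\ref{2-trans-impl-all} we have $M_1=\mathbb Z[\mathbb P(V)]^{\circ}$, hence $K\otimes M_1=K[\mathbb P(V)]^{\circ}$. By Corollary~\ref{two_fields_dist_char}, every nonzero $K[\mathrm{PGL}(V)]$-submodule of $K[\mathbb P(V)]$ contains $K\otimes M_1=K[\mathbb P(V)]^{\circ}$, and $K[\mathbb P(V)]^{\circ}$ is itself irreducible (this is condition~(iv) of the corollary, equivalent to $r$ finite). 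Therefore $K[\mathbb P(V)]^{\circ}$ is contained in every nonzero submodule, in particular in every simple one, so it is the unique simple submodule. The only point needing care is that Corollary~\ref{two_fields_dist_char} is stated for $F$ an arbitrary infinite division ring; here $F$ is a union of finite fields, hence infinite (its projective space is assumed infinite), so the corollary applies verbatim, and the extra hypothesis "$\dim V=2$ or $\ell\ne\mathrm{char}(F)$" is not even needed for~(\ref{PV_is_infinite})—it is presumably carried along only for uniformity with~(\ref{finite_case}), or I would simply note that Corollary~\ref{two_fields_dist_char} already subsumes it.

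The main obstacle I anticipate is the case $\ell\mid|\mathbb P(V)|$ in part~(\ref{finite_case}): one must show not merely that $K\cdot\sum_x[x]$ is a simple submodule but that it is the \emph{only} one, i.e.\ identify the socle of a non-semisimple module. The clean way is to combine two facts: (a) $K\cdot\sum_x[x]=K[\mathbb P(V)]^{\mathrm{PGL}(V)}$, which handles simple submodules on which $\mathrm{PGL}(V)$ acts trivially, and (b) the statement—essentially Lemma~\ref{F_char_p}'s mechanism—that $K[\mathbb P(V)]^{\circ}$ has $K\cdot\sum_x[x]$ as its unique simple submodule, which handles the rest once one observes that any simple submodule not meeting $K\cdot\sum_x[x]$ trivially would have to contain it. Establishing (b) requires knowing that a nonzero element of $K[\mathbb P(V)]^{\circ}$ generates a submodule containing the all-ones vector: over a union of finite fields in characteristic $\ell$, one takes a nonzero $\alpha=\sum a_i[x_i]$ with $\sum a_i=0$, restricts to the stabilizer $P$ of $x_1$, identifies $K[\mathbb P(V)\setminus\{x_1\}]$ with the group algebra $K[P^u]$ of the unipotent radical $P^u$ (which is a union of finite $\ell$-groups when $\ell=\mathrm{char}(F)$, so its augmentation ideal is nil), and deduces that $\alpha$ generates $[x_1]-[O]$ for every $O$, hence all of $K[\mathbb P(V)]^{\circ}$; in particular its submodule contains $\sum_x[x]$ only if $\ell\mid|\mathbb P(V)|$, which is our hypothesis. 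When $\ell\ne\mathrm{char}(F)$ one uses Corollary~\ref{two_fields_dist_char} instead. The dual half then follows formally from Remark~\ref{duality_finite_V}: the socle of $K[\mathbb P(V)]/K\cdot\sum_x[x]$ is the $K$-dual of the top quotient of the dual module, which is again $K[\mathbb P(V)]$ with the two canonical submodules swapped, and the top quotient of $K[\mathbb P(V)]$ by its radical has $K[\mathbb P(V)]^{\circ}/K\cdot\sum_x[x]$ as its unique simple piece by the same argument applied to the quotient.
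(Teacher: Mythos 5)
Your plan rests on applying Corollary~\ref{two_fields_dist_char} to handle the case $\ell\ne\mathrm{char}(F)$, and this is where the argument breaks down: that corollary is stated, and proved via Theorem~\ref{general_filtration}, for an \emph{infinite} division ring $F$. In part~(\ref{finite_case}) of the proposition $V$ is finite, so $F$ is a finite field, and the corollary simply does not apply. The tools you keep at hand --- Lemma~\ref{SL_2Q} (needs $F=\mathbb Q$), Lemma~\ref{F_char_p} (needs $\mathrm{char}(K)=\mathrm{char}(F)$ and $\dim V=2$), and Corollary~\ref{two_fields_dist_char} (needs $F$ infinite) --- together leave the case ``$F$ finite, $\dim V>2$, $\ell\nmid q$'' completely uncovered, and that is precisely the non-trivial bulk of part~(\ref{finite_case}). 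The paper fills this gap with a direct, self-contained argument: fix a hyperplane $H$, average over the translation group $U_H$ of the affine chart to get $(\sum_{h\in U_H}h)\alpha=(\sum_{x\notin H}a_x)\sum_{x\notin H}[x]+q^{\dim\mathbb P(V)}\sum_{x\in H}a_x[x]$, then apply a suitable $\eta-1$ to collapse to $H$, descending by induction on $\dim\mathbb P(V)$ to the case $\dim\mathbb P(V)=1$, which is then settled by an explicit case analysis (splitting on whether $\ell\mid q+1$, whether $\sum a_x=0$, and whether $\ell=2$). You do not have a substitute for this.

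Two further misstatements. In part~(\ref{PV_is_infinite}) you assert that ``$F$ is a union of finite fields, hence infinite (its projective space is assumed infinite)''; this is false: $F$ can be a single finite field with $\dim V=\infty$, which still makes $\mathbb P(V)$ infinite while $F$ is finite, so again Corollary~\ref{two_fields_dist_char} is not available and one must reduce to finitely many points over a finite subfield and invoke part~(\ref{finite_case}), as the paper does. You also remark that the hypothesis ``$\dim V=2$ or $\ell\ne\mathrm{char}(F)$'' is ``not even needed'' in part~(\ref{PV_is_infinite}); compare Proposition~\ref{finite_field_equal_char}, which shows that for $\dim V>2$ and $K\supseteq F$ of equal characteristic $K[\mathbb P(V)]^{\circ}$ is not simple (via the nontrivial kernel of the map to $\mathrm{Sym}^{q-1}_FV$), so dropping the hypothesis makes the conclusion false. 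The parts of your sketch that are sound --- the identification $K\cdot\sum_x[x]=K[\mathbb P(V)]^{\mathrm{PGL}(V)}$, the split/non-split dichotomy when $\ell\nmid|\mathbb P(V)|$ vs.\ $\ell\mid|\mathbb P(V)|$, and the use of Remark~\ref{duality_finite_V} to get the socle of the quotient by duality --- match the structure of the paper's argument, but they are the easy bookkeeping; the missing ingredient is the generation argument for nonzero $\alpha\in K[\mathbb P(V)]^{\circ}$ over a finite field when $\ell\ne\mathrm{char}(F)$, which must be supplied directly.
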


\begin{proof} We have to show that any $\alpha=\sum_{x\in\mathbb P(V)}a_x[x]\in K[\mathbb P(V)]$ generates 
a $K[\mathrm{PGL}(V)]$-submodule containing $K[\mathbb P(V)]^{\circ}$, whenever not all $a_x$ are equal. 
As $\mathrm{PGL}(V)$ is 2-transitive on $\mathbb P(V)$, it suffices to show that the 
$K[\mathrm{PGL}(V)]$-submodule generated by $\alpha$ contains a difference of two distinct points. 

For each $x$ with $a_x\neq 0$ fix its lift $\tilde{x}\in V$. Choose a maximal subset $B$ consisting of 
$F$-linearly independent elements among $\tilde{x}$'s. We replace $F$ with the subfield of $F$ generated 
by the coefficients of the elements $\tilde{x}$ in the base $B$, and replace $\mathbb P(V)$ with the 
projectivization of the space spanned by the $\tilde{x}$'s over the new $F$. We thus assume that 
$\mathbb P(V)$ is finite. Then we proceed by induction of the dimension $n$ of $\mathbb P(V)$. 

For each hyperplane $H\subset\mathbb P(V)$, let $U_H\subset\mathrm{PGL}(V)$ be the translation group 
of the affine space $\mathbb P(V)\setminus H$. Then $(\sum_{h\in U_H}h)\alpha=
(\sum_{x\notin H}a_x)\sum_{x\notin H}[x]+q^{\dim\mathbb P(V)}\sum_{x\in H}a_x[x]$, where $q$ is order of $F$. 

For the induction step in the case $\ell\not|q$ (and $\dim\mathbb P(V)>1$), fix some hyperplane $H$ 
containg points $y,z$ with $a_y\neq a_z$ and fix some $\eta\in\mathrm{PGL}(V)$ such that $\eta(H)=H$, 
$\eta(y)=z$ and $\eta(u)=u$ for some $u\in H$. Then $(\eta-1)(\sum_{h\in U_H}h)\alpha=
q^{\dim\mathbb P(V)}\sum_{x\in H}(a_{\eta^{-1}x}-a_x)[x]=q^{\dim\mathbb P(V)}(\cdots+(a_y-a_z)[z]+0[u])$, 
so we are reduced to the case of dimension $n-1$. 

Assume now that $\dim\mathbb P(V)=1$. \begin{enumerate} \item If $\ell$ does not divide $q+1$ then there is 
$y\in\mathbb P(V)$ such that $(q+1)a_y\neq\sum_{x\in\mathbb P(V)}a_x$. 
If $\ell$ divides $q+1$ then fix an arbitrary $y\in\mathbb P(V)$ with $a_y\neq 0$ 
(so that $(q+1)a_y=0\neq\sum_{x\in\mathbb P(V)}a_x$). Fix some involution $\xi\in\mathrm{PGL}(V)$ such that 
$\xi y\neq y$. Then $(\xi-1)(\sum_{h\in U_{\{y\}}}h)\alpha=(qa_y-\sum_{x\neq y}a_x)([\xi y]-[y])=
((q+1)a_y-\sum_{x\in\mathbb P(V)}a_x)([\xi y]-[y])$. Thus, assuming that either $\ell$ does not divide 
$q+1$ or $\sum_{x\in\mathbb P(V)}a_x\neq 0$, the $K[\mathrm{PGL}(V)]$-submodule generated by $\alpha$ 
contains $[\xi y]-[y]$, and therefore, it contains $K[\mathbb P(V)]^{\circ}$. 

Assume now that $\ell$ divides $q+1$ and $\sum_{x\in\mathbb P(V)}a_x=0$. 
Then $-a_y^{-1}(\sum_{h\in U_{\{y\}}}h)\alpha=\sum_{x\in\mathbb P(V)}[x]$. 

Assuming in addition that $\alpha\notin K\cdot\sum_{x\in\mathbb P(V)}[x]$, fix some $z\in\mathbb P(V)$ with 
$a_z\neq a_y$. Let $T\subset\mathrm{PGL}(V)$ be the torus fixing $y$ and $z$. Then $(\sum_{h\in T}h)\alpha=
(\sum_{x\neq y,z}a_x)\sum_{x\neq y,z}[x]+(q-1)a_y[y]+(q-1)a_z[z]=
-(a_y+a_z)\sum_{x\neq y,z}[x]-2a_y[y]-2a_z[z]$. 

If $\ell=2$, fix some $\xi\in\mathrm{PGL}(V)$ such that $\xi y=z$ and $\xi z\neq y$. Then 
$(\sum_{h\in T}h)\alpha=-(a_y+a_z)\sum_{x\in\mathbb P(V)}[x]+(a_y+a_z)[y]+(a_y+a_z)[z]$, so 
$(\xi-1)(\sum_{h\in T}h)\alpha=(a_y+a_z)([\xi z]-[y])$, and thus, the $K[\mathrm{PGL}(V)]$-submodule 
generated by $\alpha$ contains $[\xi z]-[y]$, and therefore, it contains $K[\mathbb P(V)]^{\circ}$. 

If $\ell\neq 2$, fix some involution $\xi\in\mathrm{PGL}(V)$ such that $\xi y=z$. Then 
$(\xi-1)(\sum_{h\in T}h)\alpha=2(a_y-a_z)([y]-[z])$, 
and thus, the $K[\mathrm{PGL}(V)]$-submodule generated by $\alpha$ contains $[y]-[z]$, and therefore, 
it contains $K[\mathbb P(V)]^{\circ}$. 
\item Any nonzero element of $K[\mathbb P(V)]$ can be considered as an element 
$\alpha=\sum_{x\in\mathbb P^1(\mathbb F_q)}a_x[x]$ for a finite subfield $\mathbb F_q\subset F$. 
Extending $\mathbb F_q$ in $F$ if necessary, we may assume that $a_x=0$ for at least one $x$. Then 
it follows from (\ref{finite_case}) that any $K[\mathrm{PGL}_2(\mathbb F_q)]$-submodule containing 
$\alpha$ contains $K[\mathbb P^1(\mathbb F_q)]^{\circ}$, and thus, any $K[\mathrm{PGL}(V)]$-submodule 
containing $\alpha$ contains $K[\mathbb P(V)]^{\circ}$. \end{enumerate} \end{proof}

\begin{proposition} \label{finite_field_equal_char} Let $F=\mathbb F_q$ be a finite field, $V$ 
be an $F$-vector space and $K$ be a field extension of $F$. Then the $K[\mathrm{PGL}(V)]$-module 
$K[\mathbb P(V)]^{\circ}$ is simple if and only if $\dim V=2$. \end{proposition}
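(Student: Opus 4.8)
The plan is to argue the two implications separately.

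\emph{(If.)} Suppose $\dim V=2$, so that $\mathbb P(V)=\mathbb P^1(\mathbb F_q)$ has $q+1$ elements. Since $\mathrm{char}(K)=\mathrm{char}(F)=p$ divides $q$, it does not divide $q+1=|\mathbb P(V)|$, so Proposition~\ref{simple_dim_1}(\ref{finite_case}) applies (its hypothesis $\dim\mathbb P(V)=1$ holds) and gives that $K[\mathbb P(V)]=K[\mathbb P(V)]^{\circ}\oplus K\cdot\sum_{x\in\mathbb P(V)}[x]$ is a sum of two simple $K[\mathrm{PGL}(V)]$-submodules; in particular $K[\mathbb P(V)]^{\circ}$ is simple.

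\emph{(Only if.)} I must show $K[\mathbb P(V)]^{\circ}$ is reducible whenever $\dim V\ge3$ (for $\dim V\le1$ it is the zero module). I plan to produce a proper nonzero $K[\mathrm{PGL}(V)]$-submodule out of point--hyperplane incidence. For each finite-dimensional subspace $W\subseteq V$ with $\dim W\ge3$, let $N_W\subseteq K[\mathbb P(W)]$ consist of those $\sum_{x\in\mathbb P(W)}a_x[x]$ with $\sum_{x\in\mathbb P(H)}a_x=0$ for every hyperplane $H\subset W$; equivalently $N_W$ is the kernel of the incidence map $K[\mathbb P(W)]\to K[\mathrm{Gr}(\dim W-1,W)]$, $[x]\mapsto\sum_{H\ni x}[H]$ (for finite-dimensional $V$ and $W=V$ this is just $\ker\eta_1^{1,\dim V-1}$ of \S\ref{morphisms}). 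The key computation is that, for every hyperplane $H\subset W$, the ``affine class'' $\alpha_{W,H}:=\sum_{x\in\mathbb P(W),\ x\not\subseteq H}[x]$ lies in $N_W$: the coefficient of a hyperplane $H'\subset W$ in its incidence image is $|\mathbb P(H')|-|\mathbb P(H\cap H')|$, which equals $0$ if $H'=H$ and equals $q^{\dim W-2}\equiv0\pmod p$ if $H'\ne H$ — this is exactly where $\dim W\ge3$ enters. Since $\alpha_{W,H}\ne0$, each $N_W$ is nonzero.

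I would then set $N:=\bigcup_W N_W$, the union over all finite-dimensional $W\subseteq V$ with $\dim W\ge3$, and check the following routine points. First, summing the defining relations of $N_W$ over all hyperplanes of $W$ and using that a fixed line lies in $\tfrac{q^{\dim W-1}-1}{q-1}\equiv1\pmod p$ of them shows $N_W\subseteq K[\mathbb P(W)]^{\circ}$. Second, $N_W\subseteq N_{W'}$ whenever $W\subseteq W'$, since a hyperplane $H'$ of $W'$ meets $W$ either in a hyperplane of $W$ or in all of $W$, the latter case being covered by the first point; hence the family $\{N_W\}$ is directed and $N$ is a subspace of $K[\mathbb P(V)]^{\circ}$. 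Third, $g(N_W)=N_{gW}$ for $g\in\mathrm{PGL}(V)$, so $N$ is $\mathrm{PGL}(V)$-stable. Fourth, $N$ is proper: for distinct points $x,y$ every finite-dimensional $W\ni x,y$ with $\dim W\ge3$ contains a hyperplane through $x$ but not $y$, so $[x]-[y]\notin N_W$, whence $[x]-[y]\in K[\mathbb P(V)]^{\circ}\setminus N$. Thus $0\subsetneq N\subsetneq K[\mathbb P(V)]^{\circ}$ and $K[\mathbb P(V)]^{\circ}$ is not simple. The step I expect to be the main obstacle is precisely the infinite-dimensional case: by Lemma~\ref{Homs} there are then no nonzero $\mathrm{PGL}(V)$-equivariant maps from $K[\mathbb P(V)]$ to any other Grassmannian module, so $N$ cannot be obtained as a kernel and has to be assembled and shown $\mathrm{PGL}(V)$-stable by hand, as the above directed union of finite-dimensional incidence kernels.
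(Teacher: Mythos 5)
Your proof is correct, and for the ``only if'' direction it takes a genuinely different route from the paper. The paper constructs the nonzero $K[\mathrm{PGL}(V)]$-morphism $K[\mathbb P(V)]^{\circ}\to K\otimes_F\mathrm{Sym}^{q-1}_FV$, $\sum_x a_x[x]\mapsto\sum_x a_x\tilde{x}^{q-1}$, and shows by an inductive dimension comparison ($(q^{\dim V}-q)/(q-1)>\binom{\dim V+q-2}{\dim V-1}$ for $\dim V>2$) that it cannot be injective, so its kernel is a proper nonzero submodule. You instead take the kernel of the point--hyperplane incidence map $\eta_1^{1,\dim W-1}$ on each finite subspace $W$ with $\dim W\ge 3$, exhibit the explicit nonzero element $\alpha_{W,H}$ lying in it (the key point being $q^{\dim W-2}\equiv 0\pmod p$ once $\dim W\ge 3$), and then assemble these kernels into a single $\mathrm{PGL}(V)$-stable proper submodule $N$ by a directed union. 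Both arguments are sound; yours is more elementary and explicit, produces a concrete nonzero vector in the submodule, and --- as you correctly anticipate --- treats the infinite-dimensional $V$ case with care (the paper's dimension count literally applies only to finite $V$, and Lemma~\ref{Homs} indeed rules out getting $N$ as a kernel of a globally defined Grassmannian morphism when $V$ is infinite). The paper's approach is more conceptual, linking the phenomenon to the symmetric powers that control modular representations of $\mathrm{GL}_n(\mathbb F_q)$ in natural characteristic. For the ``if'' direction, you invoke Proposition~\ref{simple_dim_1}(\ref{finite_case}) while the paper cites Lemma~\ref{F_char_p}; both apply (the former since $\dim\mathbb P(V)=1$ and $p\nmid q+1$, the latter directly), so this is only a cosmetic difference.
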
 
\begin{proof} Let $\mathrm{Sym}^n_FV:=(V^{\otimes_F^n})_{\mathfrak{S}_n}$ 
be the $n$-th symmetric power of $V$, so $\dim_F\mathrm{Sym}^n_FV=\binom{\dim V+n-1}{\dim V-1}$. 
The natural morphism of $K[\mathrm{PGL}(V)]$-modules 
$K[\mathbb{P}(V)]^{\circ}=K[(V\setminus\{0\})/F^{\times}]^{\circ}\to K\otimes_F\mathrm{Sym}^{q-1}_FV$, 
$\sum_xa_x[x]\mapsto\sum_xa_x\tilde{x}^{q-1}$, is nonzero. 
One has $\dim_KK[\mathbb{P}(V)]^{\circ}=|\mathbb{P}(V)|-1=(q^{\dim V}-q)/(q-1)$. 

If $\dim V=2$ then $\dim_KK[\mathbb{P}(V)]^{\circ}=\dim_KK\otimes_F\mathrm{Sym}^{q-1}_FV=q$. 

Assuming that $(q^n-q)/(q-1)\ge\binom{n+q-2}{n-1}$ for some $n\ge 2$, let us show that 
$(q^{n+1}-q)/(q-1)>\binom{n+q-1}{n}$. Indeed, $\binom{n+q-1}{n}=\binom{n+q-2}{n-1}(n+q-1)/n<
\binom{n+q-2}{n-1}q\le q(q^n-q)/(q-1)<(q^{n+1}-q)/(q-1)$. 

This implies that $\dim K[\mathbb{P}(V)]^{\circ}>\dim K\otimes_F\mathrm{Sym}^{q-1}_FV$ if $\dim V>2$, 
and thus, the above morphism is not injective, so $K[\mathbb{P}(V)]^{\circ}$ is not simple. 

The simplicity in the case $\dim V=2$ is shown in Lemma \ref{F_char_p}. \end{proof}

\vspace{5mm}

\noindent
{\sl Acknowledgement.} {\small We are grateful to Leonid Rybnikov and Sasha Kazilo 
for bringing us together and providing an exceptional enviroment that made our work possible. 

}

\end{document}